\newtheorem{theorem}{Theorem}[section]
\newtheorem{corollary}[theorem]{Corollary}
\newtheorem{remark}[theorem]{Remark}
\newtheorem{lemma}[theorem]{Lemma}
\newtheorem{proposition}[theorem]{Proposition}
\newtheorem{example}[theorem]{Example}
\numberwithin{equation}{section}
\def\r{\mathbb{R}}
\def\rn{\mathbb{R}^N}
\def\n{\mathbb{N}}
\def\rh{\rightharpoonup}
\def\irn{\int_{\r^N}}
\def\wt{\widetilde}
\def\wh{\widehat}
\def\ol{\overline}
\def\cC{\mathcal{C}}
\def\cH{\mathcal{H}}
\def\cJ{\mathcal{J}}
\def\cO{\mathcal{O}}
\def\cU{\mathcal{U}}
\def\cV{\mathcal{V}}
\def\cW{\mathcal{W}}
\def\vr{\varrho}
\def\Hr{H^1_\mathrm{rad}(\rn)}
\def\d12{\mathcal{D}^{1,2}}
\title{Normalized solutions to a non-variational Schrödinger system}
\author{Mónica Clapp\footnote{M. Clapp was partially supported by CONACYT grant A1-S-10457 (Mexico).}\; and Andrzej Szulkin}
\date{}
\begin{document}

\maketitle

\begin{abstract}
\noindent We establish the existence of positive normalized (in the $L^2$ sense) solutions to non-variational weakly coupled elliptic systems of $\ell$ equations. We consider couplings of both cooperative and competitive type. We show the problem can be formulated as an operator equation on the product of $\ell$ $L^2$-spheres and apply a degree-theoretical argument on this product to obtain existence.

\medskip

\noindent\textsc{Keywords:} Weakly coupled elliptic system, positive solution, uniform bound, Morse index, Brouwer degree.

\medskip

\noindent\textsc{MSC2010: 35J61, 35B09, 35Q55, 47H11, 58E05}
\end{abstract}

\section{The problem} \label{intro}

Let $\bar\kappa=(\kappa_1,\ldots,\kappa_\ell)$ and $\bar u=(u_1,\ldots,u_\ell)$. We look for solutions $(\bar\kappa,\bar u)$ to the system
\begin{equation} \label{eq:mainsystem}
\begin{cases}
-\Delta u_i + \kappa_i u_i = \mu_i u_i^p + \sum\limits_{\substack{j=1 \\ j\neq i}}^\ell\lambda_{ij}u_i^{\alpha_{ij}}u_j^{\beta_{ij}}, \\
\kappa_i\in\r, \quad u_i\in H^1_\mathrm{rad}(\rn),\quad u_i > 0,\quad i=1,\ldots,\ell,
\end{cases}
\end{equation}
which satisfy the constraints
\begin{equation} \label{eq:constraint}
|u_i|_2^2:=\irn u_i^2=r_i^2,\qquad i=1,\ldots,\ell,
\end{equation}
for prescribed \ $r_i>0$. Throughout the paper the following conditions will be assumed:
\begin{itemize}
\item[$(H)$] \ $2\le N\leq 4$, \ $1+\frac4N<p<\frac{N+2}{N-2}$ ($\frac{N+2}{N-2} := \infty$ if $N=2$), \ $\mu_i>0$ and $\alpha_{ij}\ge 1,\ \beta_{ij}>0$.
\end{itemize}
 Note that, multiplying the $i$-th equation by $u_i$ and integrating,
\[
\kappa_i r_i^2 = \irn\Big( \mu_iu_i^{p+1} + \sum\limits_{\substack{j=1 \\ j\neq i}}^\ell\lambda_{ij}u_i^{\alpha_{ij}+1}u_j^{\beta_{ij}}\Big) - \irn|\nabla u_i|^2. 
\]
The system \eqref{eq:mainsystem} subject to the constraints \eqref{eq:constraint} is non-variational except for a very special choice of $\lambda_{ij}$ and $\alpha_{ij}, \beta_{ij}$. In a variational setting existence of normalized solutions, both for a single equation and for systems, has been studied under a variety of assumptions in a large number of papers. See e.g. \cite{bjj, bjjs, bs, bzz, bm, i, jj, jjlu, lz, ms, ntv} and the references there.  Such systems have applications in different models in physics, e.g. they arise when studying mixtures of Bose-Einstein condensates or propagation of wave packets in nonlinear optics. The $L^2$-norms respectively represent the number of particles and power supply. The parameters $\mu_i>0$  represent the attractive interaction between particles of the same kind and $\lambda_{ij}$ the interaction between particles of different type. It can be attractive ($\lambda_{ij}>0$) or repulsive ($\lambda_{ij}<0$). The a priori unknown parameters $\kappa_i$ come from the time-dependent Schr\"odinger equations
\[
i\frac{\partial\Phi_i}{\partial t} -\Delta \Phi_i = \mu_i |\Phi_i|^{p-1}\Phi_i + \sum\limits_{\substack{j=1 \\ j\neq i}}^\ell\lambda_{ij}|\Phi_i|^{\alpha_{ij}-1}|\Phi_j|^{\beta_{ij}}\Phi_i
\]
after making the standing wave ansatz $\Phi_i(x,t) =  e^{-i\kappa_it}u_i(x)$. Note that the $L^2$-norm is conserved under the time-evolution of $\Phi_i$, i.e., $|\Phi_i(\cdot,t)|_2 = |u_i|_2 = r_i$ for all $t\in\r$.   More information about the physical background, together with references to relevant literature in physics, may be found in the papers quoted above.

Our main results in this paper are the following two theorems:

\begin{theorem} \label{mainthm}
In addition to $(H)$, suppose that $\lambda_{ij}>0,\ \alpha_{ij} > 1+\frac4N$ and $\alpha_{ij}+\beta_{ij}<p$ ($i,j=1,\ldots,\ell$, $j\neq i$). Then the system \eqref{eq:mainsystem} subject to the constraints \eqref{eq:constraint} has a solution. If $N=2$, or $N=3$ and $p\le \frac N{N-2}$, then the inequality $\alpha_{ij}+\beta_{ij}<p$ may be replaced by $\alpha_{ij}+\beta_{ij}\le p$.
\end{theorem}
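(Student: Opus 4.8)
The plan is to reformulate \eqref{eq:mainsystem}--\eqref{eq:constraint} as a fixed-point equation for a compact operator on the product of $L^2$-spheres $\cS=\prod_{i=1}^\ell S_{r_i}$, where $S_{r_i}=\{u\in\Hr:|u|_2=r_i\}$, and to produce a fixed point by a degree argument that deforms the coupled system into the decoupled one. First I would construct the solution operator. For $\kappa>0$ the resolvent $K_\kappa:=(-\Delta+\kappa)^{-1}$ is positive and, restricted to the radial class, compact. Given $\bar u\in\cS$ put
\[
g_i(\bar u)=\mu_iu_i^p+\sum_{\substack{j=1\\ j\neq i}}^\ell\lambda_{ij}u_i^{\alpha_{ij}}u_j^{\beta_{ij}}\ge\mu_iu_i^p\not\equiv0,
\]
the inequality using $\lambda_{ij}>0$ and $u_i>0$. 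The map $\kappa\mapsto|K_\kappa g_i(\bar u)|_2$ is continuous, strictly decreasing, tends to $0$ as $\kappa\to\infty$, and---this is where $2\le N\le4$ enters---tends to $+\infty$ as $\kappa\to0^+$, because the Fourier multiplier $(|\xi|^2+\kappa)^{-2}$ ceases to be integrable near $\xi=0$ in the limit precisely when $N\le4$ and $\widehat{g_i}(0)=\irn g_i>0$. Hence there is a unique $\kappa_i(\bar u)>0$ with $|K_{\kappa_i(\bar u)}g_i(\bar u)|_2=r_i$, depending continuously on $\bar u$; setting $T(\bar u)_i:=K_{\kappa_i(\bar u)}g_i(\bar u)$ gives a compact map $T:\cS\to\cS$ whose fixed points are, by the strong maximum principle, exactly the positive solutions of the problem.

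Next I would introduce the homotopy $T_t$ obtained by replacing each $\lambda_{ij}$ with $t\lambda_{ij}$, $t\in[0,1]$, so that $T_1=T$ while $T_0=T_0^{(1)}\times\cdots\times T_0^{(\ell)}$ is the decoupled operator whose $i$-th factor solves the scalar normalized problem $-\Delta u_i+\kappa_iu_i=\mu_iu_i^p$, $|u_i|_2=r_i$. In the range $1+\tfrac4N<p<\tfrac{N+2}{N-2}$ this scalar problem has a unique positive radial solution $U_i$, obtained by an $L^2$-preserving rescaling of the (unique) ground state of $-\Delta w+w=\mu_iw^p$, and $U_i$ is nondegenerate in the radial class. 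Its Morse index as a constrained critical point on $S_{r_i}$ is one, so the local index of $\mathrm{id}-T_0^{(i)}$ at $U_i$ equals $-1$; since the decoupled operator is a product, $\deg(\mathrm{id}-T_0,\cO,0)=(-1)^\ell\neq0$ on any admissible open set $\cO$ containing $\bar U=(U_1,\ldots,U_\ell)$ and no other fixed point.

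The decisive step, and the one I expect to be the main obstacle, is a uniform a priori bound: there must be $R>0$ such that every fixed point $\bar u$ of $T_t$, for every $t\in[0,1]$, satisfies $\|\bar u\|_{H^1}<R$ and $\kappa_i(\bar u)\in[R^{-1},R]$; otherwise fixed points could escape through concentration ($\kappa_i\to\infty$) or vanishing ($\kappa_i\to0^+$) and homotopy invariance would fail. For the upper bound I would run a Gidas--Spruck-type blow-up argument: along a hypothetical sequence with $\|u_i\|_\infty\to\infty$ one rescales and, because $p<\tfrac{N+2}{N-2}$ and $\alpha_{ij}+\beta_{ij}<p$ force the zero-order and coupling terms to drop out in the limit, obtains a nontrivial bounded solution of $-\Delta v=\mu_iv^p$ on $\rn$, contradicting the Liouville theorem. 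For the lower bound on $\kappa_i$ I would combine the mass identity displayed in the introduction with the Pohozaev identity of each equation; eliminating $\int|\nabla u_i|^2$ yields $\kappa_ir_i^2=c_{N,p}\,\mu_i\irn u_i^{p+1}+(\text{coupling})$ with $c_{N,p}=\frac{2(p+1)-N(p-1)}{2(p+1)}>0$ exactly because $p<\tfrac{N+2}{N-2}$, while $\alpha_{ij}>1+\tfrac4N$ keeps each coupling term mass-supercritical in $u_i$ and $\alpha_{ij}+\beta_{ij}<p$ keeps it subordinate to $\mu_iu_i^{p+1}$, so the right-hand side stays bounded below by a positive constant and $\kappa_i$ cannot collapse. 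The borderline case $\alpha_{ij}+\beta_{ij}\le p$ allowed when $N=2$, or $N=3$ and $p\le\frac N{N-2}$, is exactly where this slack vanishes and the limit system in the blow-up may survive, so there the argument must be refined.

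Finally, taking $\cO$ to be the portion of $\cS$ cut out by the a priori bounds, homotopy invariance gives $\deg(\mathrm{id}-T_1,\cO,0)=\deg(\mathrm{id}-T_0,\cO,0)=(-1)^\ell\neq0$. Hence $T_1=T$ has a fixed point in $\cO$, which is the sought positive normalized solution of \eqref{eq:mainsystem}--\eqref{eq:constraint}.
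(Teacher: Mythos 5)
Your global strategy coincides with the paper's: deform the coupled system to the uncoupled one, use uniqueness, nondegeneracy and Morse index $1$ of the scalar normalized solution to get degree $(-1)^\ell$ at $t=0$, establish uniform a priori bounds so that homotopy invariance applies, and conclude at $t=1$. The packaging differs (you build a resolvent fixed-point operator with an implicitly defined $\kappa_i(\bar u)$, the paper projects a gradient-type map onto tangent spaces of the spheres and runs Brouwer degree on Galerkin approximations through stereographic charts), and your observation on where $N\le 4$ enters is a nice one. However, three steps of your outline are genuinely incomplete or would fail as stated.

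First, the lower bound $\kappa_i\ge R^{-1}$. Combining the mass identity with ``the Pohozaev identity of each equation'' does not produce the clean relation you claim: since the system is non-variational, the Pohozaev identity of the $i$-th equation generates, after integration by parts, terms of the form $-\frac{\beta_{ij}\lambda_{ij}}{\alpha_{ij}+1}\irn u_i^{\alpha_{ij}+1}u_j^{\beta_{ij}-1}(x\cdot\nabla u_j)$, which are sign-indefinite and not dominated by $\mu_i\irn u_i^{p+1}$; and even granting your identity, you would still need a positive lower bound on $\irn u_i^{p+1}$, i.e.\ an argument excluding vanishing, which you never supply. The paper gets this from the Gagliardo--Nirenberg inequality (this is where $p,\alpha_{ij}>1+\frac4N$ is actually used: all exponents in \eqref{eq:gn} exceed $2$, forcing $|\nabla u_i|_2\ge a_2>0$), and then excludes $\kappa_{n,i}\to0$ by a compactness argument combined with a Liouville-type result (Ikoma's lemma, which is where $N\le4$ enters); see Lemma \ref{lem:positive}, Corollary \ref{cor:bound} and Proposition \ref{lem:H_bounds}. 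A related omission occurs in your blow-up: before any bound on $\kappa_{n,i}$ is known, the rescaled zero-order coefficient $\vr_n^2\kappa_{n,i}$ need not vanish, so the cases where it tends to $\infty$ or to $c>0$ must be treated separately (Cases 1 and 3 of Lemma \ref{lem:infty}); it does not simply ``drop out''.

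Second, the degree computation at $t=0$. The implication ``constrained Morse index $1$ $\Rightarrow$ local index of $\mathrm{id}-T_0^{(i)}$ equals $-1$'' is a theorem about gradient maps, and your $T_0^{(i)}$ is not a gradient: its linearization at $U_i$ contains, besides $pK_\kappa(\mu_iU_i^{p-1}\,\cdot\,)$, a correction coming from the derivative of $u\mapsto\kappa_i(u)$. You must either count the eigenvalues $>1$ of this linearization directly or homotope $\mathrm{id}-T_0^{(i)}$ to the projected gradient of $\cJ_i$ without zeros on the boundary; the paper's Section \ref{comput} and Appendix \ref{sec:hessian} (Proposition \ref{nondeg}, Lemma \ref{lem:nonexistence}, Corollary \ref{cor:degree}, and the homotopy $(s,v_i)\mapsto L_{k,i}(sv_i)G^0_{k,i}(v_i)$ in Section \ref{pfmain}) constitute precisely this work, which is the technical heart of the proof. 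Two further points need repair in your framework: $\mathrm{id}-T_t$ maps the manifold into the ambient space, so no degree is defined until you pass to charts (or finite-dimensional approximations), and $T_t$ is not defined on all of the product of spheres — once you insert the positive parts needed to make $u_i^p$ meaningful, $g_i$ vanishes identically e.g.\ at $u_i=-\omega_i$, so the domain issues must be tracked through the homotopy. Finally, the second sentence of the theorem (the case $\alpha_{ij}+\beta_{ij}=p$ for $N=2$, or $N=3$ with $p\le\frac N{N-2}$) is part of the statement, yet you explicitly defer it; the paper proves it inside the blow-up lemma, where the coupling survives in the limit and one invokes the Mitidieri--Pohozaev Liouville theorem for $-\Delta w_i\ge\mu_iw_i^p$, valid exactly in that range of $p$.
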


\begin{theorem} \label{mainthm2}
In addition to $(H)$, suppose that $N=2$ or $3$, $\lambda_{ij}<0$ and $1+\frac4{N-1} < \alpha_{ij}+\beta_{ij} < p$ ($i,j=1,\ldots,\ell$, $j\neq i$). Then the system \eqref{eq:mainsystem} subject to the constraints \eqref{eq:constraint} has a solution.
\end{theorem}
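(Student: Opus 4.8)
The plan is to recast the problem as a degree-theoretic fixed-point argument on the product of $L^2$-spheres, mirroring the strategy already announced for Theorem \ref{mainthm}, but now adapting every estimate to the competitive (repulsive) regime $\lambda_{ij}<0$. The abstract framework is the same: for each prescribed vector $\bar\kappa$ with $\kappa_i$ in a suitable range, one solves the uncoupled elliptic problems $-\Delta u_i+\kappa_iu_i = f_i(\bar u)$ on $\Hr$, producing a solution map, and then tunes $\bar\kappa$ so that the $L^2$-constraints $|u_i|_2^2=r_i^2$ are met. Equivalently, one defines an operator on $\prod_{i=1}^\ell S_{r_i}$ (the product of spheres of radius $r_i$ in $L^2$) whose zeros correspond to solutions of \eqref{eq:mainsystem}--\eqref{eq:constraint}, and one computes a Brouwer (or Leray--Schauder) degree that is shown to be nonzero.

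First I would establish, for fixed $\bar\kappa$ in an admissible box $\prod_i(0,\kappa_i^{\max})$, the existence of a positive radial solution $u_i$ to each scalar equation with the coupling terms frozen; here the subcritical and mass-supercritical conditions in $(H)$, namely $1+\frac4N<p<\frac{N+2}{N-2}$, guarantee the mountain-pass geometry and compactness on $\Hr$ (radial symmetry restores compactness of the Sobolev embedding for $2<q<2^*$). Second, and this is where the sign $\lambda_{ij}<0$ matters, I would derive \emph{uniform a priori bounds} on the solutions independent of $\bar\kappa$: the repulsive coupling now \emph{helps} bound $u_i$ from above pointwise and in $L^2$, since the negative term $\lambda_{ij}u_i^{\alpha_{ij}}u_j^{\beta_{ij}}$ can only decrease the nonlinearity, but it simultaneously threatens a lower bound and positivity, so I would use a Morse-index estimate (flagged in the keywords) together with the restriction $N=2,3$ and the hypothesis $1+\frac4{N-1}<\alpha_{ij}+\beta_{ij}$ to rule out concentration and vanishing. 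The role of $1+\frac4{N-1}$ (rather than $1+\frac4N$) is presumably to control the coupling term in a trace/boundary-type or Pohozaev estimate adapted to radial functions, where the effective dimension drops by one.

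Third, I would analyze the boundary behavior of the map $\bar\kappa\mapsto(|u_1|_2^2,\dots,|u_\ell|_2^2)$: as $\kappa_i\to0^+$ the mass $|u_i|_2^2$ should blow up (mass-supercriticality), while as $\kappa_i\to+\infty$ it should vanish, so on a large box the constraint map $\bar\kappa\mapsto(|u_i|_2^2-r_i^2)_i$ points outward on the boundary and hence has nonzero degree, yielding a $\bar\kappa$ with $|u_i|_2^2=r_i^2$ for all $i$. For this degree argument to be legitimate one needs the solution map to be well defined and continuous (at least single-valued on a set of full degree), which is the delicate point because the scalar equations may have multiple positive solutions; the competitive coupling with $\alpha_{ij}+\beta_{ij}<p$ keeps the self-interaction $\mu_iu_i^p$ dominant, so I expect the "ground state branch" to persist and depend continuously on $\bar\kappa$.

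The main obstacle I anticipate is precisely the uniform a priori bound in the repulsive case together with maintaining strict positivity of all components. Unlike the cooperative case, where all coupling terms have a definite helpful sign, here the negative cross-terms could in principle force some $u_i$ to degenerate to zero as $\bar\kappa$ varies, which would destroy both the positivity requirement $u_i>0$ and the nondegeneracy needed for the degree computation. Overcoming this is where the hypotheses $N\in\{2,3\}$ and $\alpha_{ij}+\beta_{ij}>1+\frac4{N-1}$ must do real work, most plausibly through a Morse-index-based blow-up analysis (in the spirit of the bounds used to handle the cooperative Theorem \ref{mainthm}) that excludes the formation of singular profiles and thereby confines the solution branch to a compact, strictly-positive region of $\prod_i S_{r_i}$ on which the degree is well defined and nonzero.
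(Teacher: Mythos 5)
Your overall architecture has a gap that the paper is specifically designed to avoid: your degree argument lives in the finite-dimensional space of multipliers $\bar\kappa$, and it requires a well-defined, continuous (single-valued) solution map $\bar\kappa\mapsto\bar u(\bar\kappa)$ for the system with the multipliers frozen. But that frozen system is still a coupled, \emph{non-variational} elliptic system; existence of a positive solution for fixed $\bar\kappa$ is itself a nontrivial theorem (it is the subject of \cite{cs} and is proved there by a degree argument, not by solving scalar mountain-pass problems and then ``unfreezing'' the coupling), and no uniqueness or continuous-dependence statement is available, least of all in the competitive case $\lambda_{ij}<0$. You acknowledge this (``I expect the ground state branch to persist and depend continuously on $\bar\kappa$''), but that expectation is precisely the missing proof: without it the map $\bar\kappa\mapsto(|u_i(\bar\kappa)|_2^2-r_i^2)_i$ is not even defined, and its claimed boundary behavior (mass blow-up as $\kappa_i\to0^+$, mass vanishing as $\kappa_i\to+\infty$) is justified by scaling only for the \emph{decoupled} scalar equation $-\Delta u+\kappa u=\mu u^p$, not uniformly for the coupled system.

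The paper circumvents all of this by never fixing $\bar\kappa$: the unknown is $\bar u$ on the torus $\mathscr{T}$ of $L^2$-spheres, the multipliers are the functionals $\kappa_i^t(\bar u)$ of \eqref{kappa}, and the homotopy parameter is the coupling strength $t\in[0,1]$ in \eqref{eq:system}, not $\bar\kappa$. The degree (of the Galerkin approximations $P_kS^t$, transported by stereographic projection) is computed at $t=0$, where the system decouples and each $\cJ_i$ has the unique nondegenerate critical point $\omega_i$ of Morse index $1$, giving degree $(-1)^\ell$ (Proposition \ref{nondeg}, Corollary \ref{cor:degree}); the a priori bounds (Lemma \ref{lem:infty}, Proposition \ref{lem:H_bounds}) together with Lemma \ref{lem:convergence} keep the solution set away from the relevant boundaries, so homotopy invariance carries the degree to $t=1$. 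Two of your guessed mechanisms are also off the mark. The $L^\infty$ bound is obtained by a rescaling/blow-up argument combined with Liouville-type theorems \cite{gs,mp}, not by Morse index estimates; the Morse index enters only in the $t=0$ degree computation. And the hypothesis $\alpha_{ij}+\beta_{ij}>1+\frac4{N-1}$ is used in Lemma \ref{lem:positive2} to exclude $\kappa_i=0$: by the Strauss radial lemma \cite{st} the coupling coefficient decays like $|x|^{-\gamma(N-1)/2}$ with $\gamma(N-1)/2>2$, and a comparison with $|x|^{-\delta}$, $\delta\in(N-2,N/2]$ (this is where $N\le 3$ enters), shows that a solution with $\kappa_i=0$ could not lie in $L^2(\rn)$. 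This exclusion of zero multipliers --- a maximum-principle argument, not a concentration estimate --- is what confines the solution branch to the region $\mathscr{U}^t$ where the degree argument takes place.
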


Concerning the assumptions, we always have $p>1+\frac4N$, i.e. the first term on the right-hand side of the equation in \eqref{eq:mainsystem} is mass-supercritical (but Sobolev-subcritical). We do not know if the  assumption $\alpha_{ij} > 1+\frac4N$ in Theorem \ref{mainthm} can be weakened but it cannot be simply replaced by $\alpha_{ij}\ge 1$, see \cite[Proposition 2.2]{bzz} where a nonexistence result is shown for a (variational) system of 2 equations under suitable assumptions on the parameters (in particular, $\alpha_{ij}=1$ for  $i=1,2$ there). We believe the assumption $\alpha_{ij}+\beta_{ij} > 1+\frac4{N-1}$ in Theorem \ref{mainthm2} can be weakened to the more natural $\alpha_{ij}+\beta_{ij} > 1+\frac4{N}$. Finally, it seems difficult to replace the assumption $\alpha_{ij}+\beta_{ij} < p$ by $\alpha_{ij}+\beta_{ij} \le p$ if $\lambda_{ij}<0$ because there may be no a priori $L^\infty$-bound for positive solutions as suggested by the results in \cite{dww}. Existence of such bound is crucial for our arguments. 

\medskip

Our proofs employ a homotopy argument and for this purpose we need to consider the parametri\-zed system
\begin{equation} \label{eq:system}
\begin{cases}
-\Delta u_i + \kappa_i u_i = \mu_i (u_i^+)^p + t\sum\limits_{\substack{j=1 \\ j\neq i}}^\ell\lambda_{ij}(u_i^+)^{\alpha_{ij}}(u_j^+)^{\beta_{ij}}=:f_i^t(\bar u), \\
\kappa_i\in\r,\quad 0\le t\le 1, \quad u_i\in H^1_\mathrm{rad}(\rn),\quad u_i\neq 0,\quad i=1,\ldots,\ell,
\end{cases}
\end{equation}
subject to the same constraints \eqref{eq:constraint}, where $\bar u=(u_1,\ldots,u_\ell)$ and $u_i^+ := \max\{u_i,0\}$, $u_i^- := \min\{u_i,0\}$. As above, we have $\kappa_i=\kappa_i^t$ where
\begin{equation} \label{kappa}
\kappa_i^t = \kappa_i^t(\bar u) = r_i^{-2}\left(\irn f_i^t(\bar u)u_i - \irn|\nabla u_i|^2\right).
\end{equation}
 Let
\begin{equation} \label{eq:ui}
 \mathscr{U}^t := \{\bar u=(u_1,\ldots,u_\ell): u_i\in H^1_\mathrm{rad}(\rn), \ |u_i|_2 = r_i \text{ and } \kappa_i^t(\bar u)>0 \text{ for all } i\}.
\end{equation}
We shall only be interested in solutions of \eqref{eq:system} which are contained in $\mathscr{U}^t$.

The paper is organized as follows. In Section \ref{apb} we show that solutions $(\bar\kappa,\bar u)$ of \eqref{eq:system} in $\mathscr{U}^1$ solve the problem \eqref{eq:mainsystem}-\eqref{eq:constraint}. We also show that there exist a priori bounds necessary for our homotopy argument and that a sequence of Galerkin-type approximations converges to a solution of \eqref{eq:system}. Section \ref{comput} deals with the case $t=0$ (uncoupled system). In particular, we compute the Morse index of the functional which corresponds to the uncoupled system and is defined on the torus $\mathscr{T}$ (see the notation below). In Section \ref{pfmain} Theorems \ref{mainthm} and \ref{mainthm2} are proved. In Appendix \ref{sec:hessian} we discuss the Hessian of the functional appearing Section \ref{comput}. 

\medskip

\noindent \textbf{Notation.}
 \ $|\,\cdot\,|_q$ will denote the $L^q$-norm for $1\le q\le\infty$, $\Hr := \{u\in H^1(\rn): u(x)=u(|x|) \text{ for all }x\in\rn\}$, \ $\langle\,\cdot\,,\,\cdot\,\rangle$ and $\|\,\cdot\,\|$ stand for the usual inner product and norm in $H^1(\rn)$, and
\[
\cH := [\Hr]^\ell, \qquad \mathscr S_i := \{u_i\in H^1_\mathrm{rad}(\rn): |u_i|_2=r_i\}, \qquad \mathscr{T} := \mathscr S_1\times\cdots\times \mathscr S_\ell.
\]

\section{A priori bounds} \label{apb}

\begin{lemma} \label{lem:positive}
Suppose $\lambda_{ij}>0$ for all $i,j=1,\ldots,\ell$, $j\neq i$.
If $(\bar\kappa,\bar u)$ is a solution to the system \eqref{eq:system} for some $t\in[0,1]$, then $\kappa_i\ne 0$ for any $i$. If furthermore $\bar u\in\mathscr{U}^t$, then $u_i=u_i^+$ for all $i=1,\ldots,\ell$. In particular, if $(\bar\kappa,\bar u)$ is a solution to \eqref{eq:system} for $t=1$ with $\bar u\in\mathscr{U}^1$, then it is also a solution to \eqref{eq:mainsystem}-\eqref{eq:constraint}.
\end{lemma}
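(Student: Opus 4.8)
The plan is to treat the three assertions in turn, the first being the crux.

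\textbf{Proof that $\kappa_i\ne 0$.} I would argue by contradiction, assuming $\kappa_i=0$ for some $i$. Then the $i$-th equation of \eqref{eq:system} reads $-\Delta u_i = f_i^t(\bar u)$, and since $\mu_i>0$, $\lambda_{ij}>0$ and each summand on the right is a product of nonnegative powers of $u_i^+$ and $u_j^+$, we have $f_i^t(\bar u)\ge 0$. If $u_i^+\equiv 0$, then $f_i^t(\bar u)\equiv 0$, so $-\Delta u_i=0$ with $u_i\in\Hr$ forces $u_i\equiv 0$, contradicting $u_i\ne 0$; hence $u_i^+\not\equiv 0$ and $f_i^t(\bar u)\ge\mu_i(u_i^+)^p$ is nonnegative and not identically zero. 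By elliptic regularity $u_i$ is a classical radial solution, so, writing $u_i=u_i(\rho)$ with $\rho=|x|$ and integrating $-\Delta u_i=f_i^t(\bar u)$ over the ball $B_\rho$ via the divergence theorem, I obtain $-\omega_{N-1}\rho^{N-1}u_i'(\rho)=\int_{B_\rho}f_i^t(\bar u)$, where $\omega_{N-1}=|\mathbb S^{N-1}|$. In particular $u_i'\le 0$, so $u_i$ is nonincreasing and its limit at infinity exists and must be $0$ because $u_i\in L^2$. Choosing $\rho_0$ with $\delta:=\int_{B_{\rho_0}}f_i^t(\bar u)>0$, monotonicity of the integral yields $-u_i'(\rho)\ge \delta\,\omega_{N-1}^{-1}\rho^{1-N}$ for $\rho\ge\rho_0$, and integrating this from $\rho$ to $\infty$ forces $u_i(\rho)\gtrsim \rho^{-(N-2)}$ when $N\ge 3$ and $u_i(\rho)\to-\infty$ logarithmically when $N=2$. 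In either case a direct estimate of $\int_{\rho_0}^\infty u_i(\rho)^2\rho^{N-1}\,d\rho$ shows $u_i\notin L^2(\rn)$ for $2\le N\le 4$, contradicting $|u_i|_2=r_i<\infty$. This is the step I expect to be the main obstacle, as it is where the dimensional restriction $N\le 4$ and the radial setting (which legitimizes the one-dimensional decay analysis) are genuinely used.

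\textbf{Proof that $u_i=u_i^+$ when $\bar u\in\mathscr U^t$.} Here $\kappa_i>0$ by the definition \eqref{eq:ui} of $\mathscr U^t$. I would test the $i$-th equation with $u_i^-\in\Hr$. Since $f_i^t(\bar u)$ vanishes on $\{u_i<0\}$ (each term carries a positive power of $u_i^+$, which is $0$ there), the right-hand side contributes nothing, and using $\nabla u_i\cdot\nabla u_i^-=|\nabla u_i^-|^2$ and $u_iu_i^-=(u_i^-)^2$ a.e. one gets
\[
\irn|\nabla u_i^-|^2 + \kappa_i\irn (u_i^-)^2 = \irn f_i^t(\bar u)\,u_i^- = 0 .
\]
As both terms on the left are nonnegative and $\kappa_i>0$, it follows that $u_i^-\equiv 0$, i.e. $u_i=u_i^+\ge 0$.

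\textbf{Conclusion at $t=1$.} If $\bar u\in\mathscr U^1$, the previous step gives $u_i=u_i^+\ge 0$, so $(u_i^+)^p=u_i^p$ and $(u_i^+)^{\alpha_{ij}}=u_i^{\alpha_{ij}}$, whence the parametrized system \eqref{eq:system} with $t=1$ coincides with \eqref{eq:mainsystem}. Since $\kappa_i>0$ and $u_i\not\equiv 0$ (because $|u_i|_2=r_i>0$), the strong maximum principle applied to $-\Delta u_i+\kappa_i u_i=f_i^1(\bar u)\ge 0$ upgrades $u_i\ge 0$ to $u_i>0$ in $\rn$. The constraints \eqref{eq:constraint} hold by the definition of $\mathscr U^1$, so $(\bar\kappa,\bar u)$ solves \eqref{eq:mainsystem}--\eqref{eq:constraint}, as required.
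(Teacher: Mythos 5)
Your proposal is correct, and its second and third steps (testing the $i$-th equation with $u_i^-$ when $\kappa_i>0$, then upgrading $u_i\ge 0$ to $u_i>0$ by the maximum principle) are exactly the paper's arguments. The difference lies in the main step, the exclusion of $\kappa_i=0$. The paper first runs the same $u_i^-$ test with $\kappa_i=0$ to get $u_i=u_i^+\ge 0$, so that $-\Delta u_i=f_i^t(\bar u)\ge 0$, and then simply cites the Liouville-type result \cite[Lemma A.2]{i}: a nonnegative superharmonic function belonging to $L^q(\rn)$ for $q\in[2,\frac{2N}{N-2}]$ with $N\le 4$ must vanish, contradicting $u_i\neq 0$. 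You instead prove this Liouville statement from scratch in the radial setting: the flux identity $-\omega_{N-1}\rho^{N-1}u_i'(\rho)=\int_{B_\rho}f_i^t(\bar u)\ge 0$ gives monotonicity (and hence, with the $L^2$ constraint, $u_i\ge 0$ and $u_i(\rho)\to 0$, so your argument also recovers $u_i=u_i^+$ without a separate $u_i^-$ test), and then the lower bound $u_i(\rho)\gtrsim\rho^{2-N}$ (logarithmic divergence for $N=2$) is incompatible with $\int_{\rho_0}^\infty u_i^2\rho^{N-1}\,d\rho<\infty$ precisely when $N\le 4$. This is essentially an inline proof of the cited lemma, specialized to radial functions. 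What your route buys is self-containedness and full transparency about where $N\le 4$ and the radial setting enter; what it costs is the need for classical ($\cC^2$) regularity of $u_i$ to justify the divergence theorem computation — this does hold by bootstrap and Schauder estimates (note $\beta_{ij}$ may be less than $1$, so the right-hand side is only Hölder, not Lipschitz, but that suffices), and the paper itself invokes the same regularity in the proof of Lemma \ref{lem:infty} — whereas the citation route works for any nonnegative superharmonic $L^2$ function with no radial structure or pointwise regularity needed. Both arguments are sound.
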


\begin{proof}
If $(\bar\kappa,\bar u)$ solves the system \eqref{eq:system} and $\kappa_i=0$, then $-\Delta u_i^-=0$. Hence, $u_i=u_i^+$.  Then, as $u_i\in H^1_\mathrm{rad}(\rn)\subset L^q(\rn)$ for all $q\in[2,\frac{2N}{N-2}]$, $N\leq 4$ and $-\Delta u_i\geq 0$, it follows from \cite[Lemma A.2]{i} that $u_i\equiv 0$. This is a contradiction. 

If $(\bar\kappa,\bar u)$ solves the system \eqref{eq:system} and $\bar u\in\mathscr{U}^t$, then $-\Delta u_i^-+\kappa_iu_i^-=0$ with $\kappa_i>0$. Hence $u_i=u_i^+$ as claimed.

The last conclusion is obvious (that $u_i$ is strictly positive is a consequence of the maximum principle).
\end{proof}

\begin{lemma} \label{lem:positive2}
Suppose $\lambda_{ij}<0$, $N=2$ or $3$ and $1+\frac4{N-1} < \alpha_{ij}+\beta_{ij} < p$ for all $i,j=1,\ldots,\ell$, $j\neq i$.
If $(\bar\kappa,\bar u)$ is a solution to the system \eqref{eq:system} for some $t\in[0,1]$, then $\kappa_i\ne 0$ for any $i$. If furthermore $\bar u\in\mathscr{U}^t$, then $u_i=u_i^+$ for all $i=1,\ldots,\ell$. In particular, if $(\bar\kappa,\bar u)$ is a solution to \eqref{eq:system} for $t=1$ with $\bar u\in\mathscr{U}^1$, then it is also a solution to \eqref{eq:mainsystem}-\eqref{eq:constraint}.
\end{lemma}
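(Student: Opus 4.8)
The plan is to prove Lemma~\ref{lem:positive2} by adapting the argument of Lemma~\ref{lem:positive} to the competitive case $\lambda_{ij}<0$. The structural skeleton should be identical: the two claims to establish are (i) $\kappa_i\neq 0$ for every $i$, and (ii) if $\bar u\in\mathscr{U}^t$ then $u_i=u_i^+$. The second part, as before, is essentially free: if $\bar u\in\mathscr{U}^t$ then by definition $\kappa_i=\kappa_i^t(\bar u)>0$, and testing the $i$-th equation against $u_i^-$ shows that $u_i^-$ satisfies $-\Delta u_i^- + \kappa_i u_i^- = 0$ on the set where $u_i<0$ (since $f_i^t(\bar u)$ only involves the positive parts $u_j^+$ and hence vanishes there), giving $\int|\nabla u_i^-|^2 + \kappa_i\int(u_i^-)^2 = 0$ with $\kappa_i>0$, forcing $u_i^-\equiv 0$. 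The last conclusion (that solutions in $\mathscr{U}^1$ solve \eqref{eq:mainsystem}--\eqref{eq:constraint}) then follows verbatim as in Lemma~\ref{lem:positive}, using the maximum principle for strict positivity.

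The genuinely new difficulty is part (i): showing $\kappa_i\neq 0$. In the cooperative case the coupling terms are nonnegative, so when $\kappa_i=0$ one immediately gets $-\Delta u_i = f_i^t(\bar u)\geq 0$ and can apply the Liouville-type result \cite[Lemma A.2]{i}. When $\lambda_{ij}<0$ this sign structure is lost: with $\kappa_i=0$ the equation reads $-\Delta u_i = \mu_i(u_i^+)^p + t\sum_{j\neq i}\lambda_{ij}(u_i^+)^{\alpha_{ij}}(u_j^+)^{\beta_{ij}}$, whose right-hand side may change sign, so $-\Delta u_i\geq 0$ need not hold and the Liouville argument is unavailable. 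First I would still argue, exactly as before, that $\kappa_i=0$ forces $-\Delta u_i^-=0$ and hence $u_i=u_i^+\geq 0$; the problem is then to rule out a nontrivial nonnegative solution of $-\Delta u_i = \mu_i u_i^p + t\sum_{j\neq i}\lambda_{ij}u_i^{\alpha_{ij}}u_j^{\beta_{ij}}$ in $\Hr$ with $\kappa_i=0$.

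The hard part will be replacing the Liouville argument by a Pohozaev-type nonexistence obstruction, and this is exactly where the hypotheses $N=2,3$ and $1+\frac{4}{N-1}<\alpha_{ij}+\beta_{ij}<p$ should enter. My plan is to combine the Nehari-type identity (multiply the $i$-th equation by $u_i$ and integrate, which with $\kappa_i=0$ gives $\int|\nabla u_i|^2 = \mu_i|u_i|_{p+1}^{p+1} + t\sum_{j\neq i}\lambda_{ij}\int u_i^{\alpha_{ij}+1}u_j^{\beta_{ij}}$) with the Pohozaev identity for the whole system on $\rn$. Because the solutions are radial and $N\in\{2,3\}$, one has the decay/compactness needed for these integral identities to be valid, and the radial Strauss-type estimate $|u_i(x)|\lesssim \|u_i\||x|^{-(N-1)/2}$ controls the coupling integrals — this is the source of the exponent $\frac{4}{N-1}$ appearing in the threshold $1+\frac{4}{N-1}$. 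Eliminating $\int|\nabla u_i|^2$ and $\kappa_i$ between the Nehari and Pohozaev relations should produce a linear combination of the two nonlinear terms whose coefficients are forced to be of one definite sign precisely when $\alpha_{ij}+\beta_{ij}$ lies in the stated range and $p>1+\frac4N$, yielding a contradiction unless $u_i\equiv 0$. I expect the bookkeeping with the summation over $j$ and the signs of $\lambda_{ij}<0$ to be the delicate point, since each coupling term must be estimated so that the combined Pohozaev--Nehari inequality has no admissible nontrivial solution; getting the strict inequalities in $(H)$ and in the hypothesis to line up against the Sobolev and radial-decay thresholds is where the argument either closes or fails.
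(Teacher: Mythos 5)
Your handling of part (ii) (testing against $u_i^-$, using $\kappa_i=\kappa_i^t(\bar u)>0$ on $\mathscr{U}^t$), the reduction $u_i=u_i^+\ge 0$ when $\kappa_i=0$, and the final conclusion all match the paper. The genuine gap is in your plan for ruling out $\kappa_i=0$. The Nehari--Pohozaev strategy cannot close here precisely because the system is non-variational: the Pohozaev identity for the $i$-th equation alone, obtained by multiplying by $x\cdot\nabla u_i$ and integrating by parts, produces the cross terms
\[
t\sum_{j\ne i}\frac{\lambda_{ij}\beta_{ij}}{\alpha_{ij}+1}\irn u_i^{\alpha_{ij}+1}u_j^{\beta_{ij}-1}\,(x\cdot\nabla u_j),
\]
which have no definite sign (nothing is known a priori about the sign of $x\cdot\nabla u_j$; radial solution components need not be monotone) and are not even obviously finite: the Strauss lemma gives pointwise decay of $u_j$ but not of $\nabla u_j$, and the weight $|x|$ is unbounded. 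Nor can you eliminate these terms by invoking the $j$-th equation, since $\kappa_j$ need not vanish and there is no joint energy functional whose behavior under dilations would make the cross terms combine or cancel. So the ``bookkeeping'' you flag as delicate is in fact the point where the argument fails, not merely where it is laborious.

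The paper's proof (adapted from \cite[Lemma 3.12]{bs}) avoids integral identities altogether and uses a comparison-function (barrier) argument, and the hypotheses enter differently from what you guessed. If $\kappa_i=0$, the negative couplings are moved to the left-hand side as a nonnegative potential: $-\Delta u_i + c(x)u_i = \mu_i u_i^p\ge 0$ with $c(x):=-t\sum_{j\ne i}\lambda_{ij}u_i^{\alpha_{ij}-1}(u_j^+)^{\beta_{ij}}\ge 0$ (this is where $\lambda_{ij}<0$ is used with the right sign). Strauss decay gives $c(x)\le d_2|x|^{-\gamma(N-1)/2}$ with $\gamma:=\min_{j\ne i}(\alpha_{ij}+\beta_{ij}-1)$, and the hypothesis $\alpha_{ij}+\beta_{ij}>1+\frac4{N-1}$ yields $\gamma(N-1)/2>2$, i.e. $c$ decays strictly faster than the critical rate $|x|^{-2}$. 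The maximum principle first gives $u_i>0$; then comparison with $v(x)=|x|^{-\delta}$, $\delta\in(N-2,N/2]$ (a nonempty interval exactly because $N<4$ --- this is where $N=2,3$ enters), which satisfies $-\Delta v + c(x)v<0$ near infinity, shows $u_i\ge a_0|x|^{-\delta}$ for $|x|$ large. Since $2\delta\le N$, this forces $\int_{|x|>r_0}u_i^2=\infty$, contradicting $u_i\in L^2(\rn)$. Note finally that the upper bound $\alpha_{ij}+\beta_{ij}<p$ plays no role in this lemma (it is needed later for the a priori $L^\infty$ bounds), whereas your sketch tried to make it do work here.
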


\begin{proof}
The argument excluding $\kappa_i=0$ is different here while the proof that $u_i=u_i^+$ is the same as above. 

We adapt the argument of \cite[Lemma 3.12]{bs}. Suppose $(\bar\kappa,\bar u)$ solves the system \eqref{eq:system} and $\kappa_i=0$. Then $u_i\ge 0$ and 
\[
-\Delta u_i = \mu_iu_i^p + t\sum\limits_{\substack{j=1 \\ j\neq i}}^\ell\lambda_{ij}u_i^{\alpha_{ij}}(u_j^+)^{\beta_{ij}}. 
\]
Since $u_i,u_j\in \Hr$, it follows from \cite[Radial Lemma 1]{st} that
\[
0\le u_i,u_j^+\le d_1|x|^{\frac{1-N}2}
\]
for some constant $d_1>0$. Put
\[
c(x) := -t\sum\limits_{\substack{j=1 \\ j\neq i}}^\ell\lambda_{ij}u_i^{\alpha_{ij}-1}(u_j^+)^{\beta_{ij}}.
\]
Then
\[
0 \le c(x) \le d_2|x|^{-\frac{\gamma(N-1)}2} \quad \text{where } \gamma := \min_{j\ne i}(\alpha_{ij}+\beta_{ij}-1).
\]
As $-\Delta u_i + c(x)u_i\ge 0$, the maximum principle implies $u_i>0$ in $\rn$.
Let $r:=|x|$ and $v(x) = \bar v(r) := r^{-\delta}$. Since $\Delta v = \bar v''(r)+ \frac{N-1}r \bar v'(r)$, we have
\[
-\Delta v + c(x)v \le \delta(N-2-\delta)r^{-\delta-2} + d_3r^{-\frac{\gamma(N-1)}2-\delta}
\] 
for some $d_3>0$.  Choose $\delta\in (N-2, N/2]$ (it is here the assumption $N<4$ enters). As $\gamma > \frac4{N-1}$, it follows that $\frac{\gamma(N-1)}2>2$ and therefore $-\Delta v + c(x)v < 0$ provided $|x|>r_0$ and $r_0$ is large enough. Let $w(x) := u_i(x)-a_0v(x)$ where $a_0>0$ is such that $w(x)\ge 0$ for $|x|=r_0$. Then $w$ satisfies
\[
\left\{
\begin{tabular}{ll}
$-\Delta w + c(x) w \ge 0$, & $|x|>r_0$ \\
$w(x)\ge 0$, & $|x|=r_0$.
\end{tabular}
\right.
\]
By the maximum principle \cite[Theorem 1.28]{dp}, $w>0$, i.e., $u_i>a_0v$ for $|x|>r_0$. It follows that
\[
\int_{|x|>r_0}u_i^2 \ge a_0^2\int_{|x|>r_0}v^2 = a\int_{r_0}^\infty r^{N-1-2\delta} = \infty  
\]
because $\delta\le N/2$. As $u_i\in L^2(\rn)$, this is a contradiction.
 
Using the maximum principle again, we see that if $(\bar\kappa,\bar u)$ solves \eqref{eq:mainsystem}-\eqref{eq:constraint} and $u_i\in\mathscr{U}^1$, then $u_i>0$ in $\rn$.
\end{proof}

For $i=1,\ldots,\ell$ \ set
\begin{align*}
\mathcal Q_i:=&\,\{\kappa_i:(\bar\kappa,\bar u)\text{ solves \eqref{eq:system} and } \bar u\in\mathscr{U}^t \text{ for some }t \in[0,1]\}, \\
\mathscr K_i:=&\,\{u_i:(\bar\kappa,\bar u)\text{ solves \eqref{eq:system} and } \bar u\in\mathscr{U}^t \text{ for some }t \in[0,1]\}.
\end{align*}

From now on, we assume that either the hypotheses of Theorem \ref{mainthm}, or those of Theorem \ref{mainthm2} are satisfied. 

\begin{lemma} \label{lem:infty}
The set $\mathscr K_i$ is uniformly bounded in $L^\infty(\rn)$ for each $1\le i\le\ell$.
\end{lemma}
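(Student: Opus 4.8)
The plan is to argue by contradiction through a blow-up (rescaling) argument that reduces the claim to a Liouville theorem for the pure power equation. Suppose the conclusion fails for some index; then there are $t_n\in[0,1]$ and solutions $(\bar\kappa_n,\bar u_n)$ of \eqref{eq:system} with $\bar u_n\in\mathscr U^{t_n}$ such that $M_n:=\max_{1\le k\le\ell}|u_{k,n}|_\infty\to\infty$. By Lemmas \ref{lem:positive}--\ref{lem:positive2} each $u_{k,n}$ is positive, continuous and (being radial and in $\Hr$) vanishes at infinity, so the maximum $M_n$ is attained by some component $u_{i_0,n}$ at a finite point $x_n\in\rn$. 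Set $L_n:=M_n^{-(p-1)/2}\to 0$ and rescale every component,
\[
v_{k,n}(y):=M_n^{-1}u_{k,n}(x_n+L_ny),\qquad k=1,\ldots,\ell,
\]
so that $0\le v_{k,n}\le 1$ and $v_{i_0,n}(0)=1$. Since the rescaling is centred at $x_n$ and $L_n\to 0$, the rescaled domain is all of $\rn$ and the first-order term $\tfrac{N-1}{|x|}\partial_r$ of the radial Laplacian rescales away; thus the limiting problem will live on $\rn$ whether $|x_n|$ stays bounded or $|x_n|/L_n\to\infty$.

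A direct computation turns \eqref{eq:system} into
\[
-\Delta v_{i,n}+\tilde\kappa_{i,n}v_{i,n}=\mu_i v_{i,n}^{\,p}+t_n\sum_{j\ne i}\lambda_{ij}M_n^{\alpha_{ij}+\beta_{ij}-p}v_{i,n}^{\alpha_{ij}}v_{j,n}^{\beta_{ij}},\qquad \tilde\kappa_{i,n}:=\kappa_{i,n}M_n^{-(p-1)}.
\]
The hypothesis $\alpha_{ij}+\beta_{ij}\le p$ enters here: when the inequality is strict the coupling prefactor $M_n^{\alpha_{ij}+\beta_{ij}-p}\to 0$ and the coupling disappears in the limit, while in the borderline case it stays bounded and is a fixed lower-order, sign-definite quantity. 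To control the linear term I would bound $\tilde\kappa_{i,n}$. Testing the $i$-th equation with $u_{i,n}$ gives, via \eqref{kappa},
\[
\kappa_{i,n}r_i^2=\mu_i|u_{i,n}|_{p+1}^{p+1}+t_n\sum_{j\ne i}\lambda_{ij}\irn u_{i,n}^{\alpha_{ij}+1}u_{j,n}^{\beta_{ij}}-\irn|\nabla u_{i,n}|^2 ,
\]
and, using $u_{i,n}^{p+1}\le M_n^{p-1}u_{i,n}^2$ with $|u_{i,n}|_2^2=r_i^2$, discarding the negative gradient term (and the negative coupling terms in the competitive case), and estimating $\irn u_{i,n}^{\alpha_{ij}+1}u_{j,n}^{\beta_{ij}}=O(M_n^{\alpha_{ij}+\beta_{ij}-1})=O(M_n^{p-1})$ by Young's inequality and the same pointwise trick, one gets $0<\kappa_{i,n}\le CM_n^{p-1}$. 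Hence $\tilde\kappa_{i,n}$ is bounded and, along a subsequence, $\tilde\kappa_{k,n}\to c_k\ge 0$.

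Interior elliptic estimates applied to the uniformly bounded right-hand sides yield $v_{k,n}\to v_k$ in $C^2_{\mathrm{loc}}(\rn)$, and the limit $v:=v_{i_0}$ is a bounded solution of $-\Delta v+c\,v=\mu_{i_0}v^{\,p}$ on $\rn$ with $0\le v\le 1$ and $v(0)=1$, where $c:=c_{i_0}\ge 0$. If $c=0$ this contradicts the Gidas--Spruck Liouville theorem, which, since $1<p<\tfrac{N+2}{N-2}$, admits no nontrivial nonnegative bounded solution.

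The main obstacle is to exclude the case $c>0$, in which $v$ is a genuine positive soliton and no contradiction comes from the equation alone; here the constraint \eqref{eq:constraint} must be brought in. The idea is that $c>0$ forces $\kappa_{i_0,n}\sim cM_n^{p-1}\to\infty$, so the rescaled profiles decay at a uniform exponential rate $\approx e^{-\sqrt{c}\,|y|}$ once they are small and therefore shed no mass at infinity, giving $\int v_{i_0,n}^2\to\int v_{i_0}^2<\infty$. On the other hand the change of variables gives $\irn v_{i_0,n}^2 = M_n^{\,N(p-1)/2-2}r_{i_0}^2$, whose exponent is \emph{positive} precisely because $p>1+\tfrac4N$, so the left-hand side tends to $+\infty$ --- a contradiction. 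Making the ``no mass escapes to infinity'' step rigorous, in particular controlling the profile uniformly in $n$ when the blow-up centre $x_n$ drifts and radial symmetry about it is lost, is the delicate technical heart of the argument, and it is exactly where the mass-supercriticality $p>1+\tfrac4N$ and (in the competitive case) the radial decay estimate $0\le u\le d_1|x|^{-(N-1)/2}$ play their role.
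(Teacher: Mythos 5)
Your overall strategy---blow-up at a maximum point, rescaling with the natural exponent $2/(p-1)$, interior elliptic estimates, and a Liouville theorem---is the same as the paper's, and your reduction is sound up to and including the case $c=0$: the bound $\kappa_{i,n}\le CM_n^{p-1}$ obtained by testing with $u_{i,n}$ is correct (it uses $\alpha_{ij}\ge 1$, $\alpha_{ij}+\beta_{ij}\le p$ and that $M_n$ is the largest of the sup-norms), and your case $c=0$ is exactly the paper's Case 2 (Gidas--Spruck \cite{gs}). The genuine gap is the case $c>0$, and it is not a gap that more care would fill: the mechanism you propose cannot produce a contradiction. Your own identity $\irn v_{i_0,n}^2=M_n^{N(p-1)/2-2}r_{i_0}^2\to\infty$, together with $\int_{B_R(0)}v_{i_0,n}^2\to\int_{B_R(0)}v^2<\infty$ for every fixed $R$ (which is all that $\cC^2_{\mathrm{loc}}$-convergence gives), says precisely that essentially all of the rescaled mass must sit at distances tending to infinity from the origin; these two facts are perfectly compatible, so the contradiction you aim for never materializes---the assertion ``no mass escapes'' is exactly what is false. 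Concretely, the fixed mass $r_{i_0}^2$ of $u_{i_0,n}$ lives at original-scale distances from $x_n$, hence at rescaled distances of order $M_n^{(p-1)/2}\to\infty$, invisible to the local limit. The uniform exponential decay you invoke would require $v_{i_0,n}\le\delta$ on the \emph{whole} exterior $\{|y|\ge R\}$ uniformly in $n$ before any comparison argument can start: smallness on the sphere $\{|y|=R\}$ is not enough, since the nonlinearity can only be absorbed into the linear term where $v_{i_0,n}$ is small, and nothing excludes $v_{i_0,n}$ rising again farther out (for instance in a second radial ring of $u_{i_0,n}$). Ruling out such configurations is essentially equivalent to the global control the lemma is meant to provide; in particular you cannot appeal to the Strauss bound $u\le C\|u\|_{H^1}|x|^{(1-N)/2}$ of \cite{st}, because the $H^1$ bound (Corollary \ref{cor:bound}) is \emph{deduced} from Lemma \ref{lem:infty}, so using it here would be circular.

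The paper, by contrast, never touches the constraint \eqref{eq:constraint} in this proof: all of its cases are killed by the rescaled equation alone. In the regime corresponding to your $c>0$ (its Case 3), its device is to change the zoom rather than the test: taking $\theta<2/(p-1)$ in \eqref{defvni} makes both exponents $\Gamma$ and $\gamma_{ij}$ in \eqref{eq:equality} strictly positive, so \emph{every} nonlinear term vanishes locally uniformly and the limit equation is the linear one, $-\Delta w_i+cw_i=0$ with $c>0$, which has no nonnegative solution with $w_i(0)=1$ (evaluate at the interior maximum). Two further points in your sketch need repair. First, in the borderline case $\alpha_{ij}+\beta_{ij}=p$ admitted in Theorem \ref{mainthm}, the coupling does \emph{not} disappear in the limit; after also passing to the limit in the $j$-th equations one only gets the differential inequality $-\Delta w_i\ge\mu_iw_i^p$, to which Gidas--Spruck does not apply---this is why the paper invokes the Mitidieri--Pohozaev nonexistence theorem \cite{mp} and why Theorem \ref{mainthm} carries the restriction $p\le\frac N{N-2}$ when $N=3$; your phrase ``fixed lower-order, sign-definite quantity'' glosses over this. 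Second, your case distinction should be formulated carefully (subsequential limits of $\kappa_{i,n}M_n^{-(p-1)}$ in $[0,\infty]$, with the value $+\infty$ excluded by your own bound), since the entire difficulty of the lemma is concentrated in the regime where this quotient has a positive finite limit.
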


\begin{proof}
Suppose there exists a sequence of solutions  $(\bar{\kappa}_n,\bar u_n)$  to \eqref{eq:system} for $t=t_n\in[0,1]$ with $\bar u_n\in\mathscr{U}^{t_n}$, such that $|\bar u_n|_\infty\to\infty$. Passing to a subsequence, we may assume $|u_{n,i}|_\infty\to\infty$ and  $|u_{n,i}|_\infty\ge|u_{n,j}|_\infty$ for some $i$ and all $j$. By standard regularity results all $u_{n,j}$ are in $\cC^2(\rn)$. Since $u_{n,i}\geq 0$, $u_{n,i}\neq 0$ and $u_{n,i}(x)\to 0$ as $|x|\to\infty$, \ $u_{n,i}$ attains it maximum at some $x_n\in\rn$. 

For any given $\theta>0$ (to be specified), let $\vr_n>0$ be defined by
\[
\vr_n^\theta\,|u_{n,i}|_\infty = 1.
\]
Clearly, $\vr_n\to 0$. Let
\begin{equation} \label{defvni}
v_{n,j}(y) := \vr_n^\theta u_{n,j}(\vr_ny+x_n), \quad j=1,\ldots, \ell.
\end{equation}
Then, for all $n$ and $j$, $v_{n,j}\in\cC^2(\rn)$,
\begin{equation} \label{vni}
0\le v_{n,j}\le1, \quad v_{n,i}(0)=1 \quad \text{and } v_{n,j}(x)\to 0 \text{ as } |x|\to\infty. 
\end{equation}
It follows from  \eqref{defvni} and Lemma \ref{lem:positive} or Lemma \ref{lem:positive2} that $\kappa_{n,i}>0$, \ $v_{n,j}=v_{n,j}^+$ \ and
\begin{align*}
-\Delta_y v_{n,i}  & = \vr_n^{\theta+2}\Delta_x u_{n,i} = \vr_n^{\theta+2}\Big(-\kappa_{n,i}u_{n,i} + \mu_iu_{n,i}^p+t_n\sum\limits_{j\ne i}\lambda_{ij}u_{n,i}^{\alpha_{ij}}u_{n,j}^{\beta_{ij}}\Big) \\
& = -\vr_n^2\kappa_{n,i} v_{n,i} + \vr_n^{\theta+2-\theta p}\mu_i v_{n,i}^p + t_n\sum\limits_{j\ne i} \vr_n^{\theta+2-\theta(\alpha_{ij}+\beta_{ij})}\lambda_{ij}v_{n,i}^{\alpha_{ij}}v_{n,j}^{\beta_{ij}},
\end{align*}
or equivalently,
\begin{equation} \label{eq:equality}
-\Delta v_{n,i} + \vr_n^2\kappa_{n,i} v_{n,i} = \vr_n^{\Gamma}\mu_i v_{n,i}^p + t_n\sum\limits_{j\ne i} \vr_n^{\gamma_{ij}}\lambda_{ij}v_{n,i}^{\alpha_{ij}}v_{n,j}^{\beta_{ij}}
\end{equation}
where $\Gamma := \theta+2-\theta p$ and $\gamma_{ij} := \theta+2-\theta(\alpha_{ij}+\beta_{ij})$. 

\medskip

Suppose first $\alpha_{ij}+\beta_{ij} < p$ for all $i$ and $j\ne i$. We shall consider three cases and show that none of them can occur.

\noindent\emph{Case 1.} $ \vr_n^2\kappa_{n,i}\to\infty$ up to a subsequence.\\
Choose $\theta = 2/(p-1)$. Then  $\Gamma=0$ and $\gamma_{ij}>0$. So the right-hand side of \eqref{eq:equality} is uniformly bounded while the left-hand side is not because $v_{n,i}(0)=1$ and, as $v_{n,i}$ attains its maximum at $0$, $-\Delta v_{n,i}(0)\ge 0$.

\noindent\emph{Case 2.} $ \vr_n^2\kappa_{n,i}\to 0$ up to a subsequence.\\
The argument is the same as in Case 1 of the proof of Lemma 1.2 in \cite[Appendix A]{cs} and goes back to \cite{gs, dfy}. We repeat it briefly. Let $\theta$ be as above (so $\Gamma=0, \ \gamma_{ij}>0$). Fix $R>0$. By elliptic estimates, $(v_{n,i})$ is bounded in $W^{2,q}(B_R(0))$ for some $q>N$. So passing to a subsequence, $v_{n,i} \to v_i$ weakly in $W^{2,q}(B_R(0))$ and strongly in $\cC^1(B_R(0))$. Since $\vr_n^{\gamma_{ij}}\to 0$, \ $v_i\ge 0$ and satisfies  
\[
-\Delta v_i = \mu_iv_i^p \quad \text{in } B_R(0). 
\]
Let $R_m\to\infty$. For each $m$ we get a solution $v_i^{m}$ of the above equation in $B_{R_m}(0)$. Passing to subsequences of $m$ and applying the diagonal procedure we see that $v_i^{m}\to w_i$, weakly in $W^{2,q}_{loc}(\rn)$ and strongly in $\cC^1_{loc}(\rn)$. So $-\Delta w_i = \mu_iw_i^p$ in $\rn$, $w_i\ge 0$, $w_i(0)=1$ according to \eqref{vni}, and $w_i\in \cC^2(\rn)$ by the Schauder estimates. According to \cite[Theorem 1.2]{gs}, $w_i=0$ which is a contradiction.

\noindent\emph{Case 3.} $\vr_n^2\kappa_{n,i}\to c>0$ up to a subsequence.\\
Choose $\theta< 2/(p-1)$. Then $\Gamma$ and $\gamma_{ij}$ are positive. Since now also $\vr_n^\Gamma\to 0$, using the same argument as in Case 2 we obtain $w_i\ge 0$ such that $w_i(0)=1$ and $-\Delta w_i + cw_i =0$ in $\rn$. Since $c>0$, this is impossible.

As the three cases above exhaust all possibilities, the proof is complete. 

\medskip

Next we assume $\lambda_{ij}>0$, $N=2$ or 3 and $\alpha_{ij}+\beta_{ij} \le p$ for all $i$ and $j\ne i$. If $N=3$, we also assume $p\le \frac N{N-2}$. In Case 1, $\Gamma=0$ and $\gamma_{ij}\ge 0$, and in Case 3, $\Gamma$ and $\gamma_{ij}$ are positive, so the proof is exactly the same. In Case 2 the same argument using \eqref{eq:equality}  gives
\[
-\Delta v_i = \mu_iv_i^p + t\sum\limits_{j\in A_i} \lambda_{ij}v_{i}^{\alpha_{ij}}v_{j}^{\beta_{ij}}
\]
where $A_i := \{j\ne i: \alpha_{ij}+\beta_{ij} = p\}$ ($A_i$ may be empty). Note that here we need not only $v_{n,i}\to v_i$ but also $v_{n,j}\to v_j$ for $j\in A_i$ which we can obtain by using the $j$-th equation. The diagonal procedure now leads to 
\[
-\Delta w_i = \mu_iw_i^p + t\sum\limits_{j\in A_i} \lambda_{ij}w_{i}^{\alpha_{ij}}w_{j}^{\beta_{ij}} \ge \mu_iw_i^p \quad \text{in } \rn
\]
(a similar agument appears in \cite{dww}).
As $w_i\ge 0$, the above inequality can hold only for $w_i=0$ \cite[Theorem 2.1]{mp}. This is impossible because $w_i(0)=1$. Note that the result in \cite{mp} holds for $1<p\le \frac{N}{N-2}$ if $N\ge 3$. So taking into account the assumptions of Theorem \ref{mainthm}, we have $1+\frac4N < p\le \frac{N}{N-2}$ which is satisfied only for $N=3$. If $N=2$, then an easy inspection of the proof in \cite{mp} shows that the result is true for any $p>1$. 
\end{proof}

\begin{corollary} \label{cor:bound}
The set $\mathscr K_i$ is uniformly bounded in $H^1_\mathrm{rad}(\rn)$ and the set $\mathcal Q_i$ is uniformly bounded in $\r$ for each $1\le i\le\ell$.
\end{corollary}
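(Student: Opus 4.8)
The plan is to read off both bounds directly from the uniform $L^\infty$-bound of Lemma \ref{lem:infty}, the $L^2$-constraint \eqref{eq:constraint}, and the identity \eqref{kappa}. Let $(\bar\kappa,\bar u)$ be any solution of \eqref{eq:system} for some $t\in[0,1]$ with $\bar u\in\mathscr{U}^t$, and let $M>0$ be a constant, independent of the solution and of $t$, such that $|u_j|_\infty\le M$ for all $j$; such $M$ exists by Lemma \ref{lem:infty}. By Lemma \ref{lem:positive} or Lemma \ref{lem:positive2} we have $u_j=u_j^+\ge 0$, so $f_i^t(\bar u)u_i = \mu_iu_i^{p+1} + t\sum_{j\ne i}\lambda_{ij}u_i^{\alpha_{ij}+1}u_j^{\beta_{ij}}$.

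First I would bound $\irn f_i^t(\bar u)u_i$ from above. Since $p>1$, $\alpha_{ij}\ge 1$ and $\beta_{ij}>0$, the pointwise estimates $u_i^{p+1}\le M^{p-1}u_i^2$ and $u_i^{\alpha_{ij}+1}u_j^{\beta_{ij}}\le M^{\alpha_{ij}+\beta_{ij}-1}u_i^2$ hold, so after integrating and invoking \eqref{eq:constraint} each term is controlled by a constant times $r_i^2$. In the cooperative case ($\lambda_{ij}>0$) all these terms are nonnegative and directly bounded; in the competitive case ($\lambda_{ij}<0$) the coupling terms are nonpositive and may simply be discarded, leaving $\irn f_i^t(\bar u)u_i\le \mu_iM^{p-1}r_i^2$. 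In either case $\irn f_i^t(\bar u)u_i\le C$ with $C$ independent of the solution and of $t$.

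Next, since $\bar u\in\mathscr{U}^t$ forces $\kappa_i=\kappa_i^t(\bar u)>0$, rewriting \eqref{kappa} gives
\[
\irn|\nabla u_i|^2 = \irn f_i^t(\bar u)u_i - \kappa_i r_i^2 \le \irn f_i^t(\bar u)u_i \le C,
\]
a uniform bound on the Dirichlet energy. Combined with $|u_i|_2=r_i$ this shows $\|u_i\|^2=\irn\bigl(|\nabla u_i|^2+u_i^2\bigr)$ is uniformly bounded, i.e.\ $\mathscr K_i$ is bounded in $\Hr$. Reading \eqref{kappa} once more, $0<\kappa_i = r_i^{-2}\bigl(\irn f_i^t(\bar u)u_i - \irn|\nabla u_i|^2\bigr)\le r_i^{-2}C$, so $\mathcal Q_i\subset(0,r_i^{-2}C]$ is bounded in $\r$.

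The substance of the matter is entirely in Lemma \ref{lem:infty}; what remains is bookkeeping, so I do not expect a genuine obstacle. The only point that needs a moment's care is keeping the upper bound on $\irn f_i^t(\bar u)u_i$ valid in the competitive case, where the coupling terms carry the opposite sign — but there they are nonpositive and can simply be dropped, so the estimate survives, and positivity of $\kappa_i$ (guaranteed by $\bar u\in\mathscr{U}^t$) is precisely what converts this one-sided control into the gradient bound.
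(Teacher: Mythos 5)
Your proposal is correct and follows essentially the same route as the paper: test the $i$-th equation with $u_i$, bound $\irn f_i^t(\bar u)u_i$ uniformly via the $L^\infty$-bound of Lemma \ref{lem:infty} together with the $L^2$-constraint, and then use the positivity of both $\irn|\nabla u_i|^2$ and $\kappa_i r_i^2$ (the latter from $\bar u\in\mathscr{U}^t$) to bound each term separately. The only cosmetic difference is that the paper interpolates to get $L^q$-bounds for all $2\le q\le\infty$ where you use direct pointwise estimates such as $u_i^{p+1}\le M^{p-1}u_i^2$; the substance is identical.
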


\begin{proof}
If $(\bar\kappa,\bar u)$ solves \eqref{eq:system} and $\bar u\in\mathscr{U}^t$ for some $t \in[0,1]$, then
 $\kappa_i>0$ and $u_i=u_i^+$ by Lemma \ref{lem:positive} or Lemma \ref{lem:positive2}. Multiplying the $i$-th equation in \eqref{eq:system} by $u_i$ and integrating we obtain
\[
\irn|\nabla u_i|^2 + \kappa_i r_i^2 = \irn f_i^t(\bar u)u_i. 
\]
Since $|u_i|_2 = r_i$, it follows from Lemma \ref{lem:infty} that the set $\mathscr K_i$ is bounded in $L^q(\rn)$ for any $2\le q\le \infty$. Therefore, the right-hand side above is uniformly bounded and, as a consequence, so are $|\nabla u_i|_2$ and $\kappa_i$. Hence the conclusion. 
\end{proof}

\begin{proposition} \label{lem:H_bounds}
There are positive constants $a_1,a_2,A_1,A_2$ such that
\begin{align*}
0<a_1\leq\kappa_i\leq A_1&\qquad\text{for all \ }\kappa_i\in\mathcal Q_i, \ i=1,\ldots,\ell, \\
0<a_2\leq |\nabla u_i|_2^2:=\irn|\nabla u_i|^2\leq A_2&\qquad\text{for all \ }u_i\in\mathscr K_i, \ i=1,\ldots,\ell.
\end{align*}
\end{proposition}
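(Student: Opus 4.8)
The plan is to read the upper bounds and the positivity of $\kappa_i$ directly off the preceding results, and then to prove the two lower bounds, the one for $|\nabla u_i|_2$ first and the one for $\kappa_i$ afterwards, the latter by a limiting argument that reduces to the non-existence statements already established in Lemmas \ref{lem:positive} and \ref{lem:positive2}. For the easy half, Corollary \ref{cor:bound} gives the uniform $\Hr$-bound on $\mathscr K_i$ and the uniform bound on $\mathcal Q_i$, hence the constants $A_2$ and $A_1$, while $\kappa_i>0$ for every $\kappa_i\in\mathcal Q_i$ by Lemma \ref{lem:positive} or \ref{lem:positive2}; so only the strict positivity of the infima requires work.

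For the lower bound on the gradient I would multiply the $i$-th equation by $u_i$ and integrate, using $u_i=u_i^+\ge 0$ and $\kappa_i>0$, to obtain the energy identity $|\nabla u_i|_2^2+\kappa_ir_i^2=\mu_i|u_i|_{p+1}^{p+1}+t\sum_{j\ne i}\lambda_{ij}\irn u_i^{\alpha_{ij}+1}u_j^{\beta_{ij}}$, so that $|\nabla u_i|_2^2$ is bounded by the right-hand side. Under the hypotheses of Theorem \ref{mainthm2} ($\lambda_{ij}<0$) the coupling terms are negative and may simply be discarded, leaving $|\nabla u_i|_2^2\le\mu_i|u_i|_{p+1}^{p+1}$; under those of Theorem \ref{mainthm} ($\lambda_{ij}>0$) I would estimate each coupling term by $|u_j|_\infty^{\beta_{ij}}|u_i|_{\alpha_{ij}+1}^{\alpha_{ij}+1}$, invoking the uniform $L^\infty$-bound of Lemma \ref{lem:infty}. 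Since $|u_i|_2=r_i$ is fixed, Gagliardo--Nirenberg then gives $|u_i|_{p+1}^{p+1}\le C|\nabla u_i|_2^{N(p-1)/2}$ and $|u_i|_{\alpha_{ij}+1}^{\alpha_{ij}+1}\le C|\nabla u_i|_2^{N(\alpha_{ij}-1)/2}$ (both admissible since $p<\frac{N+2}{N-2}$ and $\alpha_{ij}<p$). The crucial point is that the exponents $N(p-1)/2$ and $N(\alpha_{ij}-1)/2$ both exceed $2$, precisely by the standing assumption $p>1+\frac4N$ and, in Theorem \ref{mainthm}, by $\alpha_{ij}>1+\frac4N$; dividing the resulting inequality by $|\nabla u_i|_2^2$ shows that it fails once $|\nabla u_i|_2$ is small, yielding a uniform $a_2>0$.

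For the lower bound on $\kappa_i$ I would argue by contradiction, taking solutions $(\bar\kappa_n,\bar u_n)$ with $\bar u_n\in\mathscr{U}^{t_n}$ and $\kappa_{n,i}\to 0$. By Corollary \ref{cor:bound} and Lemma \ref{lem:infty} the $u_{n,j}$ are bounded in $\Hr\cap L^\infty$, so after passing to a subsequence $t_n\to t$ and $u_{n,j}\rightharpoonup u_j$ in $\Hr$, strongly in $L^q$ for $2<q<\frac{2N}{N-2}$ by radial compactness, and a.e. Passing to the limit in the $i$-th equation, $u_i\ge 0$ solves $-\Delta u_i=\mu_iu_i^p+t\sum_{j\ne i}\lambda_{ij}u_i^{\alpha_{ij}}u_j^{\beta_{ij}}$, i.e.\ the $i$-th equation with $\kappa_i=0$; moreover $u_i\not\equiv 0$, for otherwise $|u_{n,i}|_{p+1}$ and $|u_{n,i}|_{\alpha_{ij}+1}$ would tend to $0$ and the energy identity would force $|\nabla u_{n,i}|_2\to 0$, contradicting the bound $a_2$ just obtained. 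Finally such a nonzero $u_i\in L^2$ is excluded exactly as before: under Theorem \ref{mainthm} one has $-\Delta u_i\ge\mu_iu_i^p\ge 0$ with $u_i\in L^2$, so $u_i\equiv 0$ by \cite[Lemma A.2]{i} as in Lemma \ref{lem:positive}; under Theorem \ref{mainthm2} the comparison with $a_0|x|^{-\delta}$ from Lemma \ref{lem:positive2} forces $\int u_i^2=\infty$. Either way we reach a contradiction, so $\kappa_i\ge a_1>0$.

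The main obstacle is the limiting step of the last paragraph: one must check that the coupling terms pass to the limit so that $u_i$ really solves the $\kappa_i=0$ equation and, above all, that the limit is nonzero --- which is exactly what the previously established gradient lower bound guarantees. Once this is secured, the two Liouville-type obstructions already encoded in Lemmas \ref{lem:positive} and \ref{lem:positive2} finish the proof.
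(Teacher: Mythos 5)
Your proposal is correct and follows essentially the same route as the paper: upper bounds from Corollary \ref{cor:bound}, the gradient lower bound from the energy identity plus Gagliardo--Nirenberg with all exponents exceeding $2$ thanks to $p,\alpha_{ij}>1+\frac4N$, and the lower bound on $\kappa_i$ by a compactness/contradiction argument reduced to the nonexistence mechanisms behind Lemmas \ref{lem:positive} and \ref{lem:positive2}. The only cosmetic differences are that you estimate the coupling terms via the $L^\infty$ bound of Lemma \ref{lem:infty} where the paper uses H\"older together with the uniform bound in $\Hr\subset L^{\alpha_{ij}+\beta_{ij}+1}$, and that you re-run the Liouville arguments on the limiting $i$-th equation where the paper simply observes that the full limit $(\bar\kappa_0,\bar u_0)$ solves \eqref{eq:system} and invokes those lemmas directly.
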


\begin{proof}
It follows from Corollary \ref{cor:bound} that $A_1, A_2$ as above exist, so we must show that so do $a_1,a_2$. Suppose first $\lambda_{ij}>0$ for all $i,j=1,\ldots,\ell$, $j\neq i$.
If $(\bar\kappa,\bar u)$ solves \eqref{eq:system} and $\bar u\in\mathscr{U}^t$ for some $t$, then 
\begin{align} \label{eq:H_bounds}
&\irn|\nabla u_i|^2+\kappa_i r_i^2=\mu_i\irn|u_i|^{p+1}+t\sum\limits_{\substack{j=1 \\ j\neq i}}^\ell\lambda_{ij}\irn|u_i|^{\alpha_{ij}+1}|u_j|^{\beta_{ij}}\\ 
&\qquad\leq \mu_i|u_i|_{p+1}^{p+1} + C_1\sum\limits_{\substack{j=1 \\ j\neq i}}^\ell |u_i|^{\alpha_{ij}+1}_{\alpha_{ij}+\beta_{ij}+1} |u_j|^{\beta_{ij}}_{\alpha_{ij}+\beta_{ij}+1} \le C_2\Big(|u_i|_{p+1}^{p+1} + \sum\limits_{\substack{j=1 \\ j\neq i}}^\ell|u_i|^{\alpha_{ij}+1}_{\alpha_{ij}+\beta_{ij}+1}\Big) \nonumber 
\end{align}
because according to Corollary \ref{cor:bound}, $u_j$ are uniformly bounded in $\Hr\subset L^{\alpha_{ij}+\beta_{ij}+1}(\rn)$ for every $i,j=1,\ldots,\ell$, $i\ne j$. Since $\kappa_i\ge 0$, \eqref{eq:H_bounds} and  the Gagliardo-Nirenberg inequality yield
\begin{equation}\label{eq:gn}
|\nabla u_i|_2^2\leq C_3\Big(|\nabla u_i|_2^\frac{N(p-1)}{2}+\sum\limits_{\substack{j=1 \\ j\neq i}}^\ell|\nabla u_i|_2^{N(\alpha_{ij}+1)(\frac12-\frac1{\alpha_{ij}+\beta_{ij}+1})}\,\Big).
\end{equation}
As $p,\alpha_{ij}>1+\frac4N$ and  
\[
N(\alpha_{ij}+1)(\tfrac12-\tfrac1{\alpha_{ij}+\beta_{ij}+1}) > N(\alpha_{ij}+1)(\tfrac12-\tfrac1{\alpha_{ij}+1})
\]
(recall $\beta_{ij}>0$), all exponents on the right-hand side of \eqref{eq:gn} are $>2$. Hence, $|\nabla u_i|_2^2\geq a_2>0$ for every $u_i\in\mathscr K_i$. 

In the case $\lambda_{ij}<0$, \eqref{eq:H_bounds} simplifies to
\begin{equation} \label{ineq2}
\irn|\nabla u_i|^2+\kappa_i r_i^2 \le \mu_i|u_i|_{p+1}^{p+1},
\end{equation}
so the second term on the right-hand side of \eqref{eq:gn} can be omitted and therefore the assumption $\alpha_{ij}>1+\frac4N$ is unnecessary.

To prove that $\mathcal Q_i$ is bounded away from $0$ we argue by contradiction. Assume that $(\bar\kappa_n,\bar u_n)$ solves \eqref{eq:system} for some $t_n\in[0,1]$ with $\bar u_n\in\mathscr U^{t_n}$ and that $\kappa_{n,i}\to 0$. As $(\kappa_{n,i})$ is bounded in $\r$ and $(u_{n,i})$ is bounded in $\Hr$, passing to a subsequence, for each $i$ we have that $\kappa_{n,i}\to\kappa_{0,i}$ in $\r$, $u_{n,i}\rh u_{0,i}$ weakly in $\Hr$, $ u_{n,i}\to u_{0,i}$ strongly in $L^q(\rn)$ for $q\in(2,\frac{2N}{N-2})$ (see \cite[Compactness Lemma]{st} or \cite[Corollary 1.26]{w}) and $t_n\to t_0\in[0,1]$. As a consequence, $u_{0,i}\neq 0$ because otherwise
$$
0<a_2\leq\irn|\nabla u_{n,i}|^2\leq \mu_i\irn|u_{n,i}|^{p+1}+ C_4\irn|u_{n,i}|^{\alpha_{ij}+1}=o(1).
$$
It is straightforward to verify that $(\bar\kappa_0,\bar u_0)$ solves \eqref{eq:system} for $t_0$. But $\kappa_{0,i}=0$, contradicting Lemma \ref{lem:positive} or Lemma \ref{lem:positive2}. This completes the proof.
\end{proof}

\begin{lemma} \label{lem:compact}
$\mathscr K_i$ is compact for all $i=1,\ldots,\ell$.
\end{lemma}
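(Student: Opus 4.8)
The plan is to show that every sequence in $\mathscr K_i$ has a subsequence converging in $\Hr$ to an element of $\mathscr K_i$. So let $u_{n,i}\in\mathscr K_i$ be given, arising from solutions $(\bar\kappa_n,\bar u_n)$ of \eqref{eq:system} for parameters $t_n\in[0,1]$ with $\bar u_n\in\mathscr U^{t_n}$. By Corollary \ref{cor:bound} and Proposition \ref{lem:H_bounds} the sequences $(\kappa_{n,j})_n$ are bounded in $\r$ and $(u_{n,j})_n$ are bounded in $\Hr$ for every $j$; moreover $\kappa_{n,j}\ge a_1>0$ and $u_{n,j}=u_{n,j}^+$ by Lemma \ref{lem:positive} or Lemma \ref{lem:positive2}. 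Passing to a subsequence I may assume, for each $j$, that $t_n\to t_0\in[0,1]$, $\kappa_{n,j}\to\kappa_{0,j}\ge a_1$, $u_{n,j}\rh u_{0,j}$ weakly in $\Hr$, $u_{n,j}\to u_{0,j}$ strongly in $L^q(\rn)$ for every $q\in(2,\frac{2N}{N-2})$ and a.e. in $\rn$ (by the compactness lemma of \cite{st,w}). Passing to the limit in the weak formulation of \eqref{eq:system} — the nonlinear terms converge because $\alpha_{ij}+\beta_{ij}+1<p+1<\frac{2N}{N-2}$, so all the relevant exponents lie in the compact range — shows that $(\bar\kappa_0,\bar u_0)$ solves \eqref{eq:system} for $t=t_0$, with $\kappa_{0,i}\ge a_1>0$.

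The only thing missing to conclude $u_{0,i}\in\mathscr K_i$ and to upgrade the convergence to $\Hr$ is the identity $|u_{0,i}|_2=r_i$, i.e. that no $L^2$ mass escapes to infinity; this is the main obstacle, since the embedding $\Hr\hookrightarrow L^2(\rn)$ is not compact. I would rule out mass loss by a uniform decay estimate. By the radial lemma of \cite{st}, the uniform $\Hr$-bound gives $0\le u_{n,j}(x)\le C|x|^{\frac{1-N}2}$ with $C$ independent of $n$, so there is $R_0>0$, independent of $n$, such that on $\{|x|\ge R_0\}$ the right-hand side $f_i^{t_n}(\bar u_n)$ is bounded by $\tfrac12\kappa_{n,i}u_{n,i}$ (here one uses $\alpha_{ij}\ge1$ to absorb the coupling term when $\lambda_{ij}>0$, while for $\lambda_{ij}<0$ this term is negative and may be discarded). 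Hence $-\Delta u_{n,i}+\tfrac12\kappa_{n,i}u_{n,i}\le0$ on $\{|x|\ge R_0\}$, and comparing $u_{n,i}$ with the supersolution $M_0e^{-\sigma|x|}$ (with $\sigma^2\le a_1/2$ and $M_0$ chosen from the uniform $L^\infty$-bound of Lemma \ref{lem:infty}) via the maximum principle yields $u_{n,i}(x)\le M_0e^{-\sigma|x|}$ for $|x|\ge R_0$, uniformly in $n$. Consequently $\int_{|x|>R}u_{n,i}^2\to0$ uniformly in $n$ as $R\to\infty$.

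This tightness, together with the strong convergence on balls coming from the compact embedding $H^1(B_R)\hookrightarrow L^2(B_R)$, gives $u_{n,i}\to u_{0,i}$ in $L^2(\rn)$, whence $|u_{0,i}|_2=r_i$; thus $\bar u_0\in\mathscr U^{t_0}$ and $u_{0,i}\in\mathscr K_i$. Strong convergence in $\Hr$ then follows by testing the $i$-th equations for $u_{n,i}$ and $u_{0,i}$ against $u_{n,i}-u_{0,i}$ and subtracting: the right-hand sides converge to $0$ by the $L^q$-convergence of the nonlinearities, the lower-order contribution equals $\kappa_{n,i}|u_{n,i}-u_{0,i}|_2^2+o(1)$, which tends to $0$ by the $L^2$-convergence just established, and what remains forces $|\nabla(u_{n,i}-u_{0,i})|_2^2\to0$. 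Hence $u_{n,i}\to u_{0,i}$ in $\Hr$, proving that $\mathscr K_i$ is compact. The delicate point throughout is the uniform-in-$n$ tail estimate of the second paragraph; everything else is a routine weak-limit argument built on the a priori bounds already established.
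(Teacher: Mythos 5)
Your proof is correct, and its overall skeleton (extract a weak limit, show it solves \eqref{eq:system} for the limiting $t_0$, then test the difference of the equations for $u_{n,i}$ and $u_{0,i}$ against $u_{n,i}-u_{0,i}$) coincides with the paper's. The genuine difference is how you rule out loss of $L^2$ mass at infinity, i.e. how you get $|u_{0,i}|_2=r_i$ rather than merely $|u_{0,i}|_2\le r_i$. You treat this as a separate obstacle and resolve it by tightness: a uniform-in-$n$ bound $u_{n,i}(x)\le M_0e^{-\sigma|x|}$ on $\{|x|\ge R_0\}$, obtained from Strauss' radial decay, the uniform lower bound $\kappa_{n,i}\ge a_1$ of Proposition \ref{lem:H_bounds}, and a comparison/maximum-principle argument. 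This is correct (the hypotheses $\alpha_{ij}\ge1$, $\beta_{ij}>0$ do make the coupling absorbable when $\lambda_{ij}>0$ and discardable when $\lambda_{ij}<0$), but it is unnecessary: in your final step, instead of using the previously established $L^2$ convergence to discard the lower-order term, keep that term on the left-hand side with its favorable sign. Subtracting the two tested equations gives
\[
\irn|\nabla(u_{n,i}-u_{0,i})|^2+\kappa_{0,i}\irn|u_{n,i}-u_{0,i}|^2
=\irn\big(f_i^{t_n}(\bar u_n)-f_i^{t_0}(\bar u_0)\big)(u_{n,i}-u_{0,i})+o(1)\longrightarrow 0,
\]
where the $o(1)$ collects $(\kappa_{n,i}-\kappa_{0,i})\irn u_{n,i}(u_{n,i}-u_{0,i})$ and the right-hand side tends to $0$ using only the strong $L^q$ convergence for $2<q<\frac{2N}{N-2}$. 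Since $\kappa_{0,i}\ge a_1>0$, the left-hand side is an equivalent $H^1$ norm of the difference, so strong $\Hr$ convergence and $|u_{0,i}|_2=r_i$ drop out simultaneously --- this is exactly the paper's proof, and it makes your entire decay-estimate paragraph superfluous. What your longer route buys is a genuinely stronger by-product: a uniform exponential decay estimate for all elements of $\mathscr K_i$, which the paper's argument does not provide; what the paper's route buys is brevity and the avoidance of pointwise and maximum-principle arguments. Two small points if you keep your version: the comparison takes place on an unbounded domain, so you should note that $u_{n,i}-M_0e^{-\sigma|x|}\to 0$ as $|x|\to\infty$ (true, since radial $H^1$ functions vanish at infinity, and the solutions are continuous by regularity) before invoking the maximum principle; and membership $u_{0,i}\in\mathscr K_i$ requires $\bar u_0\in\mathscr U^{t_0}$, i.e. $|u_{0,j}|_2=r_j$ and $\kappa_{0,j}>0$ for \emph{every} $j$, so the tightness (or norm-convergence) argument must be run for all components, not only the $i$-th --- the argument is identical, but it should be said.
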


\begin{proof}
Let $(\bar\kappa_n,\bar u_n)$ be a solution to \eqref{eq:system} with $\bar u_n\in\mathscr{U}^{t_n}$ for $t_n\in[0,1]$. Passing to a subsequence we have that $\kappa_{n,i}\to\kappa_{0,i}>0$ in $\r$, $u_{n,i}\rh u_{0,i}$ weakly in $\Hr$ and $t_n\to t_0\in[0,1]$. Then $|u_{0,i}|_2\leq r_i$, and arguing as in the proof of Lemma \ref{lem:H_bounds} we see that $(\bar\kappa_0,\bar u_0)$ solves \eqref{eq:system} for $t_0$. Therefore,
\begin{align*}
0&=\irn\nabla u_{n,i}\cdot\nabla(u_{n,i}-u_{0,i})+\kappa_{n,i}\irn u_{n,i}(u_{n,i}-u_{0,i})-\irn f_i^{t_n}(\bar u_n)(u_{n,i}-u_{0,i}) \\
&\quad-\irn\nabla u_{0,i}\cdot\nabla(u_{n,i}-u_{0,i})-\kappa_{0,i}\irn u_{0,i}(u_{n,i}-u_{0,i})+\irn f_i^{t_0}(\bar u_0)(u_{n,i}-u_{0,i}).
\end{align*}
Hence, 
$$
0=\lim_{n\to\infty}\left(\irn|\nabla(u_{n,i}-u_{0,i})|^2+\kappa_{0,i}\irn |u_{n,i}-u_{0,i}|^2\right).
$$
As $\kappa_{0,i}>0$, we conclude that $|u_{0,i}|_2= r_i$ and  $u_{n,i}\to u_{0,i}$ strongly in $\Hr$. Thus $u_{0,i}\in\mathscr K_i$.
\end{proof}

Recall the notation introduced at the end of Section \ref{intro}. Note that $\mathscr S_i =G^{-1}(r_i^2)$ where $G:\Hr\to\r$ is given by $G(u):=\irn u^2$. So $\mathscr S_i$ is a smooth Hilbert submanifold of $\Hr$ and its tangent space at $u\in\mathscr S_i$ is
$$T_u(\mathscr S_i):=\Big\{v\in\Hr:\irn uv=0\Big\}.$$
Let $\wt S_i^t: \cH\to\Hr$ be defined by
\begin{equation} \label{sit}
\langle \wt S_i^t(\bar u),v\rangle = \irn \nabla u_i\cdot\nabla v + \kappa_i^t\irn u_iv -\irn f_i^t(\bar u)v \qquad\text{for all \ }v\in\Hr,
\end{equation}
where $\kappa_i^t=\kappa_i^t(\bar u)$ is given by \eqref{kappa}. Then 
\[
\langle\wt S_i^t(\bar u),u_i\rangle=0\qquad\text{for every \ }\bar u=(u_1,\ldots,u_\ell)\in\mathscr{T}, \ i=1,\ldots,\ell.
\]
Setting $\bar\kappa^t:= (\kappa_1^t,\ldots,\kappa_\ell^t)$ and $\wt S^t:= (\wt S_1^t,\ldots,\wt S_\ell^t)$, it is clear that $(\bar\kappa^t,\bar u)$ is a solution to \eqref{eq:constraint}-\eqref{eq:system} if and only if $\bar u\in\mathscr{T}$ and $\wt S^t(\bar u)=0$. 

Let $Q_{u_i}:\Hr\to T_{u_i}(\mathscr{S}_i)$ be the orthogonal projection and define
\begin{equation} \label{S}
S^t:= (S_1^t,\ldots,S_\ell^t):\mathscr{T}\to T_{\bar u}(\mathscr{T}),\qquad\text{with}\qquad S_i^t(\bar u):=Q_{u_i}\wt S_i^t(\bar u).
\end{equation}

\begin{proposition}\label{prop:S}
$(\bar\kappa^t,\bar u)$ is a solution to \eqref{eq:constraint}-\eqref{eq:system} if and only if  $S^t(\bar u)=0$. 
\end{proposition}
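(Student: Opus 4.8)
The plan is to reduce everything to the characterization recorded just above the proposition, namely that $(\bar\kappa^t,\bar u)$ solves \eqref{eq:constraint}-\eqref{eq:system} if and only if $\bar u\in\mathscr{T}$ and $\wt S^t(\bar u)=0$. Since both the condition $S^t(\bar u)=0$ and the notion of solution presuppose $\bar u\in\mathscr{T}$ (the map $S^t$ is only defined there), it suffices to prove, for a fixed $\bar u\in\mathscr{T}$ and each index $i$, the equivalence $\wt S_i^t(\bar u)=0\iff S_i^t(\bar u)=0$.

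One implication is immediate: as $S_i^t(\bar u)=Q_{u_i}\wt S_i^t(\bar u)$ and $Q_{u_i}$ is a linear orthogonal projection, $\wt S_i^t(\bar u)=0$ forces $S_i^t(\bar u)=0$. The content of the proposition lies in the converse, and the main point is to locate the $H^1$-orthogonal complement of $T_{u_i}(\mathscr{S}_i)$. I would introduce the Riesz representative $w_i\in\Hr$ of the bounded functional $v\mapsto\irn u_iv$, so that $\langle w_i,v\rangle=\irn u_iv$ for all $v\in\Hr$ (concretely $w_i=(I-\Delta)^{-1}u_i$, which is radial since $u_i$ is). Then $T_{u_i}(\mathscr{S}_i)=\{v:\langle w_i,v\rangle=0\}$ is exactly the $H^1$-orthogonal complement of $w_i$, and since $\langle w_i,u_i\rangle=\irn u_i^2=r_i^2>0$ we have $w_i\neq 0$; hence $T_{u_i}(\mathscr{S}_i)^\perp=\mathrm{span}\{w_i\}$ is one-dimensional. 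Consequently $S_i^t(\bar u)=0$ means precisely that $\wt S_i^t(\bar u)=\sigma_i w_i$ for some scalar $\sigma_i\in\r$.

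To finish I would invoke the identity $\langle\wt S_i^t(\bar u),u_i\rangle=0$, valid for every $\bar u\in\mathscr{T}$, which is the very reason the \emph{a priori} unknown $\kappa_i^t$ is defined by \eqref{kappa}: substituting $v=u_i$ into \eqref{sit} and inserting the value of $\kappa_i^t$ from \eqref{kappa}, the term $\kappa_i^t r_i^2$ splits as $\irn f_i^t(\bar u)u_i-|\nabla u_i|_2^2$, which cancels the gradient term and the $f_i^t$-term. Pairing $\wt S_i^t(\bar u)=\sigma_i w_i$ with $u_i$ then yields $0=\sigma_i\langle w_i,u_i\rangle=\sigma_i r_i^2$, and since $r_i>0$ we conclude $\sigma_i=0$, i.e. $\wt S_i^t(\bar u)=0$. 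Carrying this out for each $i$ gives $\wt S^t(\bar u)=0$, whence $(\bar\kappa^t,\bar u)$ is a solution by the characterization above.

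The only genuine subtlety — and the step I would treat most carefully — is that $\mathscr{S}_i$ is cut out by the $L^2$-norm, so its tangent space is defined by $L^2$-orthogonality to $u_i$, whereas $Q_{u_i}$ is the projection in the $H^1$ inner product. One must therefore resist the temptation to declare the normal direction to be $\mathrm{span}\{u_i\}$; it is $\mathrm{span}\{w_i\}$, and it is the nondegeneracy $\langle w_i,u_i\rangle=r_i^2\neq 0$, combined with the built-in relation $\langle\wt S_i^t(\bar u),u_i\rangle=0$, that forces the normal component $\sigma_i w_i$ to vanish.
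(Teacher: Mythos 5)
Your proof is correct, and it differs from the paper's in one structural respect: which side of the duality gets decomposed. The paper never identifies the $H^1$-orthogonal complement of $T_{u_i}(\mathscr{S}_i)$ at all; it simply observes that $\Hr=T_{u_i}(\mathscr{S}_i)\oplus\r u_i$ as an \emph{algebraic} (non-orthogonal) direct sum — valid because $\irn u_i^2=r_i^2\neq 0$ — and then notes that the functional $\langle\wt S_i^t(\bar u),\cdot\,\rangle$ vanishes on $T_{u_i}(\mathscr{S}_i)$ (from $S_i^t(\bar u)=0$) and on $u_i$ (from the built-in identity), hence on every $v\in\Hr$, so $\wt S_i^t(\bar u)=0$. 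You instead decompose the vector $\wt S_i^t(\bar u)$ itself: you compute $\ker Q_{u_i}=T_{u_i}(\mathscr{S}_i)^\perp=\mathrm{span}\{w_i\}$ via the Riesz representative $w_i=(I-\Delta)^{-1}u_i$ of $v\mapsto\irn u_iv$, write $\wt S_i^t(\bar u)=\sigma_iw_i$, and kill $\sigma_i$ by pairing with $u_i$, using $\langle w_i,u_i\rangle=r_i^2>0$. Both arguments rest on exactly the same two ingredients — vanishing on the tangent space and the identity $\langle\wt S_i^t(\bar u),u_i\rangle=0$ forced by the definition \eqref{kappa} of $\kappa_i^t$ — so the mathematical content is the same; what your version buys is an explicit description of the $H^1$-normal direction to $\mathscr{S}_i$, making concrete the ``subtlety'' you flag (the normal is $w_i$, not $u_i$). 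What the paper's version buys is economy: since only a complement, not the orthogonal complement, is needed, the $L^2$-defined direction $\r u_i$ serves directly, and Riesz representation never enters.
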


\begin{proof}
It suffices to show that $S^t(\bar u)=0$ if and only if $\wt S^t(\bar u)=0$. Clearly, $S^t(\bar u)=0$ if $\wt S^t(\bar u)=0$. On the other hand, if $S_i^t(\bar u)=0$ then $\langle\wt S_i^t(\bar u),v\rangle=0$ for every $v\in T_{u_i}(\mathscr{S}_i)$. Since $\Hr=T_{u_i}(\mathscr{S}_i)\oplus\r u_i$ and $\langle\wt S_i^t(\bar u),u_i\rangle=0$, we have that $\langle\wt S_i^t(\bar u),v\rangle=0$ for every $v\in \Hr$.
\end{proof}

Let $(E_{k,i})_{k\ge 1}$ be an ascending sequence of linear subspaces of $\Hr$ (to be fixed later) such that $\dim E_{k,i} = k$ and $\overline{\bigcup_{k\ge1} E_{k,i}} = \Hr$, and put 
\[
E_k := E_{k,1} \times \cdots\times E_{k,\ell}\subset\cH, \qquad P_{k,i}: \Hr\to E_{k,i}, \qquad P_{k}: \cH\to E_{k}
\]
where $P_{k,i}, P_k$ are the orthogonal projections. We shall make repeated use of the following result.
 
\begin{lemma} \label{lem:convergence}
Let $(\bar u_n)$ be a bounded sequence in $\mathscr{T}$ such that $\bar u_n\in \cU^{t_n}\cap E_{k_n}$ where $k_n\to\infty$ and $P_{k_n}S^{t_n}(\bar u_n)=0$. \ If $t_n\to t$, \ $\cW\subset \ol{\mathscr{U}^t}$ \ and the distance $\mathrm{dist}(\bar u_n,\cW)\to 0$, then there exists a solution $(\bar\kappa^t,\bar u)$ to \eqref{eq:constraint}-\eqref{eq:system} such that $\bar u\in \ol\cW$. 
\end{lemma}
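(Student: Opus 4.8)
The plan is to extract a convergent subsequence of $(\bar u_n)$ and pass to the limit in the (projected) equations, the main issue being to prevent a loss of $L^2$-mass at infinity, which I expect to be the principal obstacle.

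First I would extract limits. Since $(\bar u_n)$ is bounded in $\cH$, after passing to a subsequence $u_{n,i}\rh u_i$ in $\Hr$, and by compactness of the embedding $\Hr\hookrightarrow L^q(\rn)$ for $2<q<\frac{2N}{N-2}$ (the Compactness Lemma of \cite{st}), $u_{n,i}\to u_i$ strongly in each such $L^q$. The numbers $\kappa_{n,i}:=\kappa_i^{t_n}(\bar u_n)$ are bounded, since their defining formula \eqref{kappa} involves only $|\nabla u_{n,i}|_2$ and subcritical integrals of $\bar u_n$; thus $\kappa_{n,i}\to\kappa_{0,i}$, and as $\bar u_n\in\mathscr U^{t_n}$ we have $\kappa_{n,i}>0$, whence $\kappa_{0,i}\ge 0$. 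Also $t_n\to t$, and by weak lower semicontinuity $|u_i|_2\le r_i$.

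The key algebraic observation is that the Galerkin condition produces a genuine (unprojected) weak equation in the limit. Writing $\wt S_i^{t_n}(\bar u_n)=S_i^{t_n}(\bar u_n)+\sigma_{n,i}\eta_{n,i}$, where $\eta_{n,i}$ spans the $H^1$-normal to $\mathscr S_i$ at $u_{n,i}$, the hypothesis $P_{k_n}S^{t_n}(\bar u_n)=0$ gives $\langle S_i^{t_n}(\bar u_n),v\rangle=0$ for all $v\in E_{k_n,i}$, so $\langle\wt S_i^{t_n}(\bar u_n),v\rangle=\sigma_{n,i}\langle\eta_{n,i},v\rangle$ there. Testing with $v=u_{n,i}\in E_{k_n,i}$ and using $\langle\wt S_i^{t_n}(\bar u_n),u_{n,i}\rangle=0$ (valid on $\mathscr T$) together with $\langle\eta_{n,i},u_{n,i}\rangle\ne 0$ (as $u_{n,i}\notin T_{u_{n,i}}(\mathscr S_i)$) forces $\sigma_{n,i}=0$. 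Hence
\[
\irn\nabla u_{n,i}\cdot\nabla v+\kappa_{n,i}\irn u_{n,i}v-\irn f_i^{t_n}(\bar u_n)v=0\qquad\text{for all }v\in E_{k_n,i}.
\]
Fixing $v\in\bigcup_m E_{m,i}$ and letting $n\to\infty$ (using $u_{n,i}\rh u_i$, the strong $L^q$ convergence inside the nonlinearity, and $\kappa_{n,i}\to\kappa_{0,i}$), density of $\bigcup_m E_{m,i}$ in $\Hr$ yields $-\Delta u_i+\kappa_{0,i}u_i=f_i^t(\bar u)$ weakly for each $i$; that is, $(\bar\kappa_0,\bar u)$ solves the PDE system \eqref{eq:system} with parameter $t$, provided each $u_i\ne 0$.

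It remains to prove $\kappa_{0,i}>0$ and strong convergence, which is where the compactness difficulty lies. For $u_i\ne0$ I would reuse the Gagliardo--Nirenberg argument of Proposition \ref{lem:H_bounds}: the identity $|\nabla u_{n,i}|_2^2+\kappa_{n,i}r_i^2=\irn f_i^{t_n}(\bar u_n)u_{n,i}$ (which is just \eqref{kappa}) together with $\kappa_{n,i}>0$ forces $|\nabla u_{n,i}|_2^2\ge a_2>0$ uniformly, so if $u_i=0$ then strong $L^q$ convergence would drive the right-hand side to $0$ and hence $|\nabla u_{n,i}|_2\to0$, a contradiction. Thus each $u_i\ne0$, $(\bar\kappa_0,\bar u)$ is a bona fide solution of \eqref{eq:system}, and Lemma \ref{lem:positive} (resp. Lemma \ref{lem:positive2}) rules out $\kappa_{0,i}=0$; hence $\kappa_{0,i}>0$. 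Comparing the energy identity for $\bar u_n$ with the one obtained by testing the limit equation with $u_i$, and passing to the limit, gives $\lim_n|\nabla u_{n,i}|_2^2=|\nabla u_i|_2^2-\kappa_{0,i}(r_i^2-|u_i|_2^2)$; weak lower semicontinuity forces $\kappa_{0,i}(r_i^2-|u_i|_2^2)=0$, so $|u_i|_2=r_i$ because $\kappa_{0,i}>0$. Then $|\nabla u_{n,i}|_2\to|\nabla u_i|_2$ and $|u_{n,i}|_2\to|u_i|_2$ upgrade weak convergence to strong convergence $u_{n,i}\to u_i$ in $\Hr$. Consequently $\bar u\in\mathscr T$, $\kappa_{0,i}=\kappa_i^t(\bar u)$ by continuity, so $\wt S^t(\bar u)=0$ and $(\bar\kappa^t,\bar u)$ solves \eqref{eq:constraint}-\eqref{eq:system}; finally $\mathrm{dist}(\bar u,\cW)\le\|\bar u-\bar u_n\|+\mathrm{dist}(\bar u_n,\cW)\to0$ yields $\bar u\in\ol\cW$. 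The main obstacle throughout is the non-compactness (possible escape of $L^2$-mass to infinity), which is overcome only once $\kappa_{0,i}>0$ is secured.
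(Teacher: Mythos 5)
Your proposal is correct and follows essentially the same route as the paper's own proof: the Galerkin condition combined with $\langle\wt S_i^{t_n}(\bar u_n),u_{n,i}\rangle=0$ yields the unprojected weak equation tested on $E_{k_n,i}$, the Gagliardo--Nirenberg lower bound from Proposition \ref{lem:H_bounds} rules out $u_i=0$, Lemma \ref{lem:positive} or Lemma \ref{lem:positive2} gives $\kappa_{0,i}>0$, and the energy comparison with weak lower semicontinuity yields $|u_i|_2=r_i$ and strong convergence. The only cosmetic difference is that you eliminate the normal component $\sigma_{n,i}\eta_{n,i}$ of $\wt S_i^{t_n}(\bar u_n)$ directly, whereas the paper equivalently decomposes the test function $P_{k_n,i}v=v_n+\alpha_n u_{n,i}$ and passes to the limit using the boundedness of $\wt S_i^{t_n}(\bar u_n)$ rather than density of $\bigcup_m E_{m,i}$.
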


\begin{proof}
Set $\kappa_{n,i}^{t_n}:=\kappa_{i}^{t_n}(\bar u_n)$. Since $\langle\wt S_i^{t_n}(\bar u_n),u_{n,i}\rangle = 0$, we have
\begin{equation} \label{eq:kappa_bdd}
0<\kappa_{n,i}^{t_n}r_i^2\leq\irn|\nabla u_{n,i}|^2 +\kappa_{n,i}^{t_n}\irn u_{n,i}^2=\irn f_i^{t_n}(\bar u_n)u_{n,i}.
\end{equation}
As $(\bar u_n)$ is bounded in $\cH$, it follows that $\kappa_{n,i}^{t_n}$ is bounded and, passing to a subsequence, $\kappa_{n,i}^{t_n}\to \wh\kappa_i\ge0$. Furthermore, we derive from \eqref{sit} that $(\wt S_i^{t_n}(\bar u_n))$ is bounded in $\Hr$.

Let $v\in\Hr$. Then $P_{k_n,i}v=v_{n}+\alpha_{n}u_{n,i}$ with $v_{n}\in T_{u_{n,i}}(\mathscr{S}_i)\cap E_{k_n,i}$ and $\alpha_n\in\r$. Since, by assumption, $P_{k_n}S^{t_n}(\bar u_n)=0$, we have that
\begin{equation*}
\langle\wt S_i^{t_n}(\bar u_n),P_{k_n,i}v\rangle=\langle\wt S_i^{t_n}(\bar u_n),v_n\rangle=\langle S_i^{t_n}(\bar u_n),v_n\rangle=\langle P_{k_n,i}S_i^{t_n}(\bar u_n),v_n\rangle=0.
\end{equation*}
As $P_{k_n,i}v\to v$ and $(\wt S_i^{t_n}(\bar u_n))$ is bounded in $\Hr$, it follows that
\begin{equation} \label{eq:limit_Si}
0=\lim_{n\to\infty}\langle \wt S_i^{t_n}(\bar u_n),v\rangle = \lim_{n\to\infty}\left(\irn\nabla u_{n,i}\cdot \nabla v +\kappa_{n,i}^{t_n}\irn u_{n,i}v-\irn f_i^{t_n}(\bar u_n)v\right)
\end{equation}
for every $v\in\Hr$. \ Passing to a subsequence again, $u_{n,i}\rh u_i$ weakly in $\Hr$ and $u_{n,i}\to u_i$ strongly in $L^q(\rn)$ for all $2<q<\frac{2N}{N-2}$ and all $i$ (\cite[Compactness Lemma 2]{st}, \cite[Corollary 1.26]{w}). Thus, from \eqref{eq:limit_Si} we get that either $u_i=0$, or $\wh\kappa_i=\kappa_i^t(\bar u)$ and $\wt S_i^t(\bar u)=0$, where $\bar u=(u_1,\ldots,u_\ell)$. 

Since $(u_{n,i})$ are bounded in $\Hr$ and $u_{n,i}\ne 0$, from the equality in \eqref{eq:kappa_bdd} we derive the inequalities  \eqref{eq:H_bounds} and \eqref{eq:gn} for $u_{n,i}$ and, thus, conclude that $(|\nabla u_{n,i}|_2)$ is bounded away from $0$ (if $\lambda_{ij}<0$, the summation term in \eqref{eq:gn} should be omitted as follows from \eqref{ineq2}). Therefore, as
\begin{align*}
0<a_3 & \le \lim_{n\to\infty}\left(\irn|\nabla u_{n,i}|^2 +\kappa_{n,i}^{t_n}\irn u_{n,i}^2\right) \\
& = \lim_{n\to\infty}\irn f_i^{t_n}(\bar u_n)u_{n,i} = \irn f_i^t(\bar u)u_i = \irn|\nabla u_i|^2 +\wh\kappa_{i}\irn u_i^2,
\end{align*}
we have that $u_i\ne 0$. But then, by Lemma \ref{lem:positive} or Lemma \ref{lem:positive2}, $\wh\kappa_i=\kappa_i^t(\bar u)>0$, and as a consequence, $u_{n,i}\to u_i$ in $\Hr$. So $\bar u_n\to\bar u$ and, as $\mathrm{dist}(\bar u_n,\cW)\to 0$, it follows that $\bar u\in\ol\cW$.
\end{proof}

\section{Computations for the uncoupled system} \label{comput}

If $t=0$, the system \eqref{eq:system} is uncoupled and, for each $i\in\{1,\ldots,\ell\}$, the $i$-th components of its solutions satisfying \eqref{eq:constraint} are the critical points of the functional
\[
\cJ_i: \mathscr S_i\to\r,\qquad  \cJ_i(u_i) := \frac12\irn|\nabla u|^2 - \frac1{p+1}\irn\mu_i(u^+)^{p+1}.
\]
Let $I_i:\Hr\to\r$ be given by
$$
I_i(u):=\frac12\irn|\nabla u|^2 + \frac{\kappa_i}{2}\irn u^2 - \frac1{p+1}\irn\mu_i(u^+)^{p+1}.
$$
The critical points of $I_i$ are the solutions to the problem
\begin{equation} \label{eq:equation}
-\Delta u + \kappa_i u = \mu_i (u^+)^p, \qquad u\in H^1_\mathrm{rad}(\rn).
\end{equation}
If $u\in\mathscr S_i$, then $u$ is a critical point of $I_i$ if and only if $u$ is a critical point of $\cJ_i$ and
\begin{equation} \label{kappai}
\kappa_i = r_i^{-2}\left(\irn\mu_i(u^+)^{p+1} - \irn|\nabla u|^2\right),
\end{equation}
see Proposition \ref{prop:hessian}$\,(i)$.

The identity in the next lemma goes back to \cite[Lemma 2.7]{jj} where it has been shown for more general right-hand sides. The argument here is the same as in \cite{jj} but we provide it for the reader's convenience. 

\begin{lemma} \label{lem:NP}
All nontrivial solutions to \eqref{eq:equation} satisfy the identity
\[
\irn|\nabla u|^2 = \frac N2\,\frac{p-1}{p+1}\irn \mu_i (u^+)^{p+1}.
\]
Moreover, $\kappa_i>0$ for such solutions.
\end{lemma}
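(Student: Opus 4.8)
The plan is to combine two integral identities and then solve a small linear system. Multiplying \eqref{eq:equation} by $u$ and integrating over $\rn$, and using $(u^+)^p u = (u^+)^{p+1}$, yields the Nehari-type identity
\[
\irn|\nabla u|^2 + \kappa_i\irn u^2 = \mu_i\irn(u^+)^{p+1}.
\]
The second ingredient is the Pohozaev identity. Writing \eqref{eq:equation} as $-\Delta u = g(u)$ with $g(s)=-\kappa_i s + \mu_i(s^+)^p$ and antiderivative $G(s) = -\frac{\kappa_i}{2}s^2 + \frac{\mu_i}{p+1}(s^+)^{p+1}$, I would show
\[
\frac{N-2}{2}\irn|\nabla u|^2 = N\irn G(u) = -\frac{N\kappa_i}{2}\irn u^2 + \frac{N\mu_i}{p+1}\irn(u^+)^{p+1}.
\]

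The derivation of the Pohozaev identity is the main obstacle and the only nonroutine step. It is obtained either by differentiating $\lambda\mapsto E(u(\cdot/\lambda))$ at $\lambda=1$ (where $E$ is the associated energy) or, equivalently, by testing the equation with $x\cdot\nabla u$ and integrating by parts; in both cases one must justify that the boundary terms at infinity vanish. This requires $u\in\cC^2(\rn)$, which follows from elliptic regularity since $\Delta u = \kappa_i u - \mu_i(u^+)^p$ lies in a suitable $L^q$-space, together with decay of $u$ and $\nabla u$. For $u\in\Hr$ the radial lemmas already invoked in the excerpt give $|u(x)|\le d|x|^{(1-N)/2}$ and a bootstrap yields matching control on $\nabla u$; alternatively one quotes \cite[Lemma 2.7]{jj} directly, as the paper suggests.

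It then remains to combine the two identities. Setting $A:=\irn|\nabla u|^2$, $B:=\irn u^2$ and $C:=\mu_i\irn(u^+)^{p+1}$, the Nehari identity reads $A+\kappa_i B = C$ and the Pohozaev identity reads $\frac{N-2}{2}A = -\frac{N\kappa_i}{2}B + \frac{N}{p+1}C$. Using $\kappa_i B = C-A$ from the first to eliminate $\kappa_i B$ from the second gives $-A + \frac N2 C = \frac{N}{p+1}C$, that is $A = \frac N2\,\frac{p-1}{p+1}\,C$, which is the claimed identity.

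Finally, for the sign of $\kappa_i$: for a nontrivial solution $C>0$, since $C=0$ would force $A=0$ and hence $u$ constant, i.e. $u\equiv 0$ in $\Hr$; consequently $B>0$ as well. Writing $\kappa_i = (C-A)/B = (C/B)\bigl(1-\tfrac N2\tfrac{p-1}{p+1}\bigr)$, I observe that $\frac N2\frac{p-1}{p+1}$ is strictly increasing in $p$ and equals $1$ exactly at the Sobolev-critical exponent $p=\frac{N+2}{N-2}$; since $(H)$ gives $p<\frac{N+2}{N-2}$, this factor is $<1$ and therefore $\kappa_i>0$.
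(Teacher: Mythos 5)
Your proposal is correct and follows essentially the same route as the paper: combine the Nehari identity (testing with $u$) with the Pohozaev identity and eliminate to obtain both the integral identity and the sign of $\kappa_i$, the latter hinging on $p+1<2^*$ exactly as in the paper's elimination $(1-\tfrac{p+1}{2^*})\irn|\nabla u|^2=(\tfrac{p+1}{2}-1)\kappa_i\irn u^2$. Your additional remarks on justifying Pohozaev via regularity and decay (or by citing \cite[Lemma 2.7]{jj}) match the paper's citation-based treatment.
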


\begin{proof}
Each solution $u$ satisfies
\[
\irn|\nabla u|^2 + \kappa_i\irn u^2 = \irn \mu_i (u^+)^{p+1}
\]
as well as the Pohozaev identity (see e.g. \cite[Appendix B]{w}) 
\[
\tfrac{N-2}2\irn|\nabla u|^2 + \tfrac N2\kappa_i\irn u^2 = \tfrac N{p+1}\irn\mu_i (u^+)^{p+1}.  
\]
Eliminating $\kappa_i$ gives the first conclusion. Eliminating the right-hand sides gives
\[
(1-\tfrac{p+1}{2^*})\irn|\nabla u|^2 = (\tfrac{p+1}2-1)\,\kappa_i\irn u^2,
\]
so $\kappa_i>0$. Here $\frac{p+1}{2^*} := 0$ if $N=2$.
\end{proof}

It is well known that there exists a unique positive solution $\omega_0$ to the problem \eqref{eq:equation}  with $\kappa_i=1$ \cite{kw, cl} and it is a nondegenerate critical point of $I_i:\Hr\to\r$ \cite[Appendix C]{nt}. Setting
\[
\omega_i(x):=\gamma_i^\frac{2}{p-1}\omega_0(\gamma_ix) \quad \text{and} \quad \kappa_i := \gamma_i^{2},
\]
we see that $\omega_i$ solves \eqref{eq:equation} and is unique and nondegenerate as well. As $|\omega_i|_2^2 = \gamma_i^{\frac 4{p-1}-N}|\omega_0|_2^2$ and $p\ne 1+\frac4N$, \ $|\omega_i|_2$ can be prescribed arbitrarily. From now on we assume $\gamma_i$ has been chosen so that $\omega_i\in\mathscr S_i$. 

\begin{proposition} \label{nondeg}
$\omega_i$ is the unique critical point of $\cJ_i: \mathscr S_i\to\r$. Moreover, it is nondegenerate and its Morse index is $1$.
\end{proposition}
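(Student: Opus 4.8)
\emph{Uniqueness.} The plan is to determine all critical points of $\cJ_i$ on $\mathscr S_i$ and then to read off nondegeneracy and the Morse index from the Hessian. First I would invoke the Lagrange multiplier rule: a point $u\in\mathscr S_i$ is critical for $\cJ_i$ exactly when $-\Delta u+\kappa_iu=\mu_i(u^+)^p$ holds with $\kappa_i$ given by \eqref{kappai}, i.e. $u$ solves \eqref{eq:equation} (this is the equivalence already recorded before Proposition \ref{prop:hessian}). By Lemma \ref{lem:NP} any such $\kappa_i$ is positive, so testing the equation with $u^-$ gives $\irn|\nabla u^-|^2+\kappa_i\irn(u^-)^2=0$, whence $u\ge0$ and, by the maximum principle, $u>0$. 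Thus $u$ is the unique positive solution of \eqref{eq:equation} for that value of $\kappa_i$, namely $u=\kappa_i^{1/(p-1)}\omega_0(\kappa_i^{1/2}\,\cdot)$, whose mass equals $\kappa_i^{\frac2{p-1}-\frac N2}|\omega_0|_2^2$. Since $(H)$ gives $p>1+\frac4N$, the exponent $\frac2{p-1}-\frac N2$ is strictly negative, so $\kappa_i\mapsto|u|_2^2$ is strictly decreasing; the constraint $|u|_2=r_i$ then fixes $\kappa_i=\gamma_i^2$ uniquely and forces $u=\omega_i$.

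\emph{Hessian and the decisive identity.} For the remaining assertions I would analyze the Hessian. Since the Lagrangian of the constrained problem is $I_i$ up to an additive constant, the Hessian of $\cJ_i$ at $\omega_i$ is the restriction to $T_{\omega_i}(\mathscr S_i)=\{v:\irn\omega_iv=0\}$ of the quadratic form of the self-adjoint operator $L:=-\Delta+\kappa_i-p\mu_i\omega_i^{p-1}$ (with $\kappa_i=\gamma_i^2$). By hypothesis $\omega_i$ is a nondegenerate critical point of $I_i$ on $\Hr$, so $\ker L=\{0\}$ and $L$ is invertible; moreover $\irn\omega_iL\omega_i=(1-p)\mu_i\irn\omega_i^{p+1}<0$, and the classical fact that the linearization at a positive solution has a simple lowest eigenvalue with positive eigenfunction, and no other negative eigenvalue, gives $n_-(L)=1$ in the radial class. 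The decisive quantity is then the sign of $\irn\omega_iL^{-1}\omega_i$. Differentiating $-\Delta u_\kappa+\kappa u_\kappa=\mu_iu_\kappa^p$ for the family $u_\kappa=\kappa^{1/(p-1)}\omega_0(\kappa^{1/2}\,\cdot)$ in $\kappa$ yields $L(\partial_\kappa u_\kappa)=-u_\kappa$ at $\kappa=\kappa_i$, so $L^{-1}\omega_i=-\partial_\kappa u_\kappa|_{\kappa_i}$ and
\[
\irn\omega_i\,L^{-1}\omega_i=-\tfrac12\tfrac{d}{d\kappa}\Big|_{\kappa_i}|u_\kappa|_2^2>0,
\]
again by the monotonicity used for uniqueness.

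\emph{Nondegeneracy and index.} Both now follow from the elementary behaviour of a quadratic form under restriction to a hyperplane. If $v\in T_{\omega_i}(\mathscr S_i)$ lies in the kernel of the restricted form then $Lv\in\r\omega_i$, so $v=cL^{-1}\omega_i$, and $\irn\omega_iv=0$ gives $c\irn\omega_iL^{-1}\omega_i=0$, hence $c=0$ and $v=0$; this is nondegeneracy. Likewise, restriction to the hyperplane $T_{\omega_i}(\mathscr S_i)$ lowers the index by one precisely when $\irn\omega_iL^{-1}\omega_i<0$ and leaves it unchanged when this quantity is positive, so we obtain $n_-\big(L|_{T_{\omega_i}(\mathscr S_i)}\big)=n_-(L)=1$. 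Geometrically the surviving negative direction is the $L^2$-preserving dilation $\zeta=\tfrac N2\omega_i+x\cdot\nabla\omega_i$: along $u_s=s^{N/2}\omega_i(s\,\cdot)$ one has $\cJ_i(u_s)=\tfrac12s^2|\nabla\omega_i|_2^2-\tfrac{\mu_i}{p+1}s^{N(p-1)/2}\irn\omega_i^{p+1}$, which has a strict maximum at $s=1$ because $N(p-1)/2>2$, consistent with the Morse index being $1$.

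\emph{Main obstacle.} I expect the two structural inputs about $L$ to be the delicate part, rather than the computations above: the identification of the constrained Hessian of $\cJ_i$ with $L|_{T_{\omega_i}(\mathscr S_i)}$, and the count $n_-(L)=1$ in $\Hr$. These are precisely the points I would isolate in the discussion of the Hessian in Appendix \ref{sec:hessian}; once they are in hand, the sign computation $\irn\omega_iL^{-1}\omega_i>0$ does all the remaining work.
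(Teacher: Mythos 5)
Your argument is correct, and the uniqueness step essentially coincides with the paper's: critical points of $\cJ_i$ solve \eqref{eq:equation} with some multiplier, which is positive by Lemma \ref{lem:NP}; positivity places them on the scaling branch $u_\kappa=\kappa^{1/(p-1)}\omega_0(\kappa^{1/2}\,\cdot\,)$, and strict monotonicity of $\kappa\mapsto|u_\kappa|_2^2$ (this is where $p\neq1+\frac4N$ enters) forces $u=\omega_i$ --- the paper compresses exactly this into one sentence plus the scaling remark preceding the proposition. Your treatment of nondegeneracy and the Morse index, however, is genuinely different. The paper produces the negative direction by computing $\frac{d^2}{ds^2}\big|_{s=0}\cJ_i(s\star\omega_i)<0$ (your dilation direction $\zeta$ is precisely $\frac{d}{ds}\big|_{s=0}(s\star\omega_i)$), and then caps the index and excludes degeneracy via the min--max comparison $\lambda_j\le\wt\lambda_j$ between the eigenvalues of $L$ on $\Hr$ and of its restriction to $T_{\omega_i}(\mathscr S_i)$, using $\lambda_2>0$. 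You instead run a Vakhitov--Kolokolov/Grillakis--Shatah--Strauss type argument: the splitting $\Hr=T_{\omega_i}(\mathscr S_i)\oplus\r L^{-1}\omega_i$, which is orthogonal for the quadratic form of $L$ because $\irn\big(L(L^{-1}\omega_i)\big)w=\irn\omega_iw=0$ for tangent $w$, reduces both questions to the sign of $\irn\omega_iL^{-1}\omega_i=-\tfrac12\tfrac{d}{d\kappa}\big|_{\kappa_i}|u_\kappa|_2^2>0$. Both routes consume the same external inputs --- the identification of the covariant Hessian with the restriction of $I_i''(\omega_i)$ (Proposition \ref{prop:hessian}$\,(ii)$), nondegeneracy of $\omega_i$ for $I_i$ from \cite{nt}, and $n_-(L)=1$ on $\Hr$; for the last point your appeal to a ``classical fact'' should be replaced by a concrete citation, e.g. the paper's mountain-pass argument via \cite[Corollary II.3.1]{ch}, and your hyperplane-restriction principle should be justified by the orthogonal splitting above rather than quoted as folklore (with invertibility of $L$ following from trivial kernel plus the identity-minus-compact structure, as the paper notes). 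What your route buys: a single sign computation settles nondegeneracy and the exact index simultaneously, it recycles the same solution branch already used for uniqueness, and it makes transparent that mass-supercriticality $p>1+\frac4N$ is exactly what pushes the index up to $1$ rather than $0$. What the paper's route buys: it never inverts $L$, working only with min--max values, so it sidesteps the invertibility discussion and the restriction lemma altogether.
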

\begin{proof}
Since $\omega_i$ is the only positive solution for \eqref{eq:equation}, it follows from Lemma \ref{lem:positive} that it is the only critical point of $I_i$, and by Proposition \ref{prop:hessian}(i), also of $\cJ_i$ (here the conditions on the coupling terms are irrelevant).

It is easy to see and is well known that $\omega_i$ is a mountain pass point for $I_i$, hence the Morse index of $\omega_i$ with respect to $I_i$ is $1$, see e.g. \cite[Corollary II.3.1]{ch}. 

Let 
\begin{equation} \label{sstar}
(s\star u)(x) := e^{Ns/2}u(e^sx),\quad s\in\r.
\end{equation}
Then $s\star u\in \mathscr{S}_i$ for all $s$ and
\begin{align*}
I_i(s\star u) & = \frac12e^{2s}\irn|\nabla u|^2 + \kappa_i\irn u^2 - \frac1{p+1}e^{Ns(p-1)/2}\irn \mu_i(u^+)^{p+1}, \\
\cJ_i(s\star u) & = \frac12e^{2s}\irn|\nabla u|^2 - \frac1{p+1}e^{Ns(p-1)/2}\irn \mu_i(u^+)^{p+1}. 
\end{align*}
The family of functions \eqref{sstar} has been introduced in \cite{jj}.
Differentiating and using Lemma \ref{lem:NP}, we obtain
\[
\frac d{ds}\Big|_{s=0}I_i(s\star\omega_i) = \frac d{ds}\Big|_{s=0}\cJ_i(s\star\omega_i) = \irn|\nabla \omega_i|^2 - \frac N2\,\frac{p-1}{p+1}\irn \mu_i\omega_i^{p+1} = 0.
\]
As $p>1+4/N$, it follows from the equality above that
\begin{align*}
\frac {d^2}{ds^2}\Big|_{s=0}I_i(s\star\omega_i) & = \frac {d^2}{ds^2}\Big|_{s=0}\cJ_i(s\star\omega_i) = 2\irn|\nabla \omega_i|^2 - \frac {N^2}4\,\frac{(p-1)^2}{p+1}\irn \mu_i\omega_i^{p+1} \\
& = \left(N\,\frac{p-1}{p+1} - \frac {N^2}4\,\frac{(p-1)^2}{p+1}\right) \irn\mu_i\omega_i^{p+1} < 0.
\end{align*} 
Setting $c(s) := s\star \omega_i$ and using Proposition \ref{prop:hessian}$\,(ii)$, we also have
\[
\frac {d^2}{ds^2}\Big|_{s=0}I_i(c(s)) = I_i''(c(0))[\dot c(0),\dot c(0)] + I_i'(c(0))\ddot c(0) = I_i''(c(0))[\dot c(0),\dot c(0)] = \nabla^2 \cJ_i[\dot c(0),\dot c(0)]|_{\omega_i}.
\]
So $\nabla^2 \cJ_i[\dot c(0),\dot c(0)]|_{\omega_i} < 0$ and the Morse index of $\omega_i$ with respect to $\cJ_i$ is positive. 

Let $L:=\mathbf{H}I_i(\omega_i)$ be the Hessian of $I_i$ at $\omega_i$, i.e. $L$ is the linear mapping representing $I''_i(\omega_i)$, see Example \ref{example_hessian} (here and in what follows we use the term Hessian both for the quadratic form and for the linear mapping representing it). By Proposition \ref{prop:hessian}$\,(ii)$, $Q_{\omega_i}\circ L|_{T_{\omega_i}(\mathscr{S}_i)}$ is the covariant Hessian of $\cJ_i$ at $\omega_i$ where $Q_{\omega_i}: H^1_{\text{rad}}(\rn)\to T_{\omega_i}(\mathscr{S}_i)$ is the orthogonal projection. It remains to show that the Morse index of $\omega_i$ with respect to $\cJ_i$ is $1$ and $Q_{\omega_i}\circ L|_{T_{\omega_i}(\mathscr{S}_i)}$ has trivial kernel. For $j=1,2$, let
\begin{align*}
\lambda_j  : = \sup_{\substack{A\subset H^1_{\text{rad}}(\rn)\\ \text{codim\,}A\le j-1}}\inf_{\substack{u\in A\\ \|u\|=1}}\langle Lu,u\rangle \qquad\text{and}\qquad
\wt\lambda_j  : = \sup_{\substack{A\subset T_{\omega_i}(\mathscr{S}_i)\\ \text{codim\,} A\le j-1}}\inf_{\substack{u\in A\\ \|u\|=1}}\langle Lu,u\rangle.
\end{align*}
Notice that, if $A$ is a subspace of codimension $\le j-1$ of $\Hr$, then $A\cap T_{\omega_i}(\mathscr{S}_i)$ is a subspace of codimension $\le j-1$ of $T_{\omega_i}(\mathscr{S}_i)$ and
$$\inf_{\substack{u\in A\\ \|u\|=1}}\langle Lu,u\rangle\leq\inf_{\substack{u\in A\cap T_{\omega_i}(\mathscr{S}_i)\\ \|u\|=1}}\langle Lu,u\rangle\leq\wt\lambda_j.$$
Hence $\lambda_j\le\wt\lambda_j$. As $L$ is of the form \emph{identity minus compact}, it has no essential spectrum. Thus $\lambda_j$ and $\wt\lambda_j$ are eigenvalues as follows from
\cite[Theorem XIII.1]{rs}.
Since the Morse index of $\omega_i$ with respect to $\cJ_i$ is positive, $\wt\lambda_1 < 0$. As $\omega_i$ is a nondegenerate critical point of index 1 for $I_i$, $0 < \lambda_2\le \wt\lambda_2$  and therefore it is also nondegenerate of index 1 for $\cJ_i$.
\end{proof}

\begin{remark} \label{rem:morse}
\emph{
It follows from the proposition above that $\omega_i$ is a mountain pass point for $\cJ_i$. This is in fact known, see \cite{jj}. However, our proposition gives more precise information which will be needed below.
}
\end{remark}

The stereographic projection $\sigma_i:\mathscr S_i\smallsetminus\{-\omega_i\}\to T_{\omega_i}(\mathscr S_i)$ from the point $-\omega_i$ given by
\begin{equation}\label{eq:stereographic}
\sigma_i(u)=\frac{r_i^2u-\big(\irn \omega_iu\big)\,\omega_i}{r_i^2+\irn \omega_iu},\qquad u\in\mathscr S_i\smallsetminus\{-\omega_i\},
\end{equation}
is a diffeomorphism whose inverse is
$$\sigma_i^{-1}(v)=\frac{2r_i^2v+(r_i^2-|v|_2^2)\,\omega_i}{r_i^2+|v|_2^2},\qquad v\in T_{\omega_i}(\mathscr S_i),$$
where $|v|_2$ is the $L^2$-norm of $v$. So $\sigma_i^{-1}$ is a chart for the submanifold $\mathscr S_i$ of $\Hr$ at $\omega_i$, and $\sigma_i(\omega_i)=0$. By Proposition \ref{nondeg}, $0$ is a nondegenerate critical point of $\Phi_i:=\cJ_i\circ \sigma_i^{-1}$ of Morse index $1$. So the Hessian $\mathbf{H}\Phi_i(0):T_{\omega_i}(\mathscr S_i)\to T_{\omega_i}(\mathscr S_i)$ of $\Phi_i$ at $0$, defined by
$$\langle\mathbf{H}\Phi_i(0)v,w\rangle=\Phi_i''(0)[v,w]\qquad\text{for all \ }v,w\in T_{\omega_i}(\mathscr S_i),$$
is invertible, $T_{\omega_i}(\mathscr S_i)$ is the orthogonal sum in $\Hr$ of the subspaces $F_i^-$ and $F_i^+$ where the Hessian is negative and positive definite respectively, and $\dim F_i^-= 1$.

We fix an ascending sequence $(F_{k,i})_{k\ge 2}$ of linear subspaces of $T_{\omega_i}(\mathscr S_i)$ such that $F_{2,i}:= F_i^-$, $F_{k,i}:=F_{2,i}\oplus \wt F_{k,i}$ ($k\ge 3$) where $\wt F_{k,i}\subset F_i^+$, $\dim F_{k,i} = k-1$ and  $\overline{\bigcup_{k\ge2} F_{k,i}} = T_{\omega_i}(\mathscr S_i)$. Let $E_{1,i}$ be the orthogonal complement of $T_{\omega_i}(\mathscr S_i)$ in $\Hr$ and set $E_{k,i} := F_{k,i}\oplus E_{1,i}$ for $k\ge 2$. Then, $\dim E_{k,i} = k$ and $\overline{\bigcup_{k\ge1} E_{k,i}} = \Hr$. Furthermore, we assume that $\wt F_{3,i}$ has been chosen in such a way that $\omega_i\in E_{3,i}$. Note that $\sigma_i$ maps $(\mathscr S_i\smallsetminus\{-\omega_i\})\cap E_{k,i}$ into $F_{k,i}$ for each $k\ge 3$. 

Let $P_{k,i}:\Hr\to E_{k,i}$ be the orthogonal projection. Put
\[
\cW_i := \{v_i\in \mathscr{S}_i: \kappa_i>0\}, \qquad \text{where \ }\kappa_i := r_i^{-2}\left(\irn\mu_i(v_i^+)^{p+1} - \irn|\nabla v_i|^2\right),
\]
and notice that $\cW_1\times\cdots\times \cW_\ell = \mathscr{U}^0$ (see \eqref{eq:ui} for the definition of $\mathscr{U}^0$),

\begin{lemma} \label{lem:nonexistence}
For each open bounded neighborhood $\cV\subset\sigma_i(\cW_i)$ of $0$ in $T_{\omega_i}(\mathscr S_i)$ there exists $k_\cV\in\n$ such that, for all $k\ge k_\cV$, $0$ is the only critical point of $\Phi_i|_{F_{k,i}}$ in $\cV\cap F_{k,i}$.
\end{lemma}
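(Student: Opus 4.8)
The plan is to argue by contradiction, feeding the criticality condition into the compactness already built into Lemma \ref{lem:convergence} and then exploiting the nondegeneracy of $0$ together with the special structure of the spaces $F_{k,i}$. Suppose the conclusion fails for some bounded open $\cV\subset\sigma_i(\cW_i)$ containing $0$. Then there are $k_n\to\infty$ and critical points $v_n$ of $\Phi_i|_{F_{k_n,i}}$ with $v_n\in(\cV\cap F_{k_n,i})\smallsetminus\{0\}$. Set $u_n:=\sigma_i^{-1}(v_n)$; since $\sigma_i$ maps $(\mathscr S_i\smallsetminus\{-\omega_i\})\cap E_{k_n,i}$ onto $F_{k_n,i}$, we have $u_n\in\mathscr S_i\cap E_{k_n,i}$, and $u_n\in\cW_i$ (i.e. $\kappa_i^0(u_n)>0$) because $v_n\in\cV\subset\sigma_i(\cW_i)$. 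As $\cV$ is bounded and $\sigma_i^{-1}$ maps bounded sets to bounded sets (this is immediate from the explicit formula for $\sigma_i^{-1}$), the sequence $(u_n)$ is bounded in $\Hr$. Differentiating $\Phi_i=\cJ_i\circ\sigma_i^{-1}$ and using that $d\sigma_i^{-1}(v_n)$ carries $F_{k_n,i}$ onto $T_{u_n}(\mathscr S_i)\cap E_{k_n,i}$, the condition $d(\Phi_i|_{F_{k_n,i}})(v_n)=0$ becomes $\langle\wt S_i^0(u_n),\xi\rangle=0$ for all $\xi\in T_{u_n}(\mathscr S_i)\cap E_{k_n,i}$.

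This is precisely the identity used in the proof of Lemma \ref{lem:convergence} with $t_n\equiv 0$: there one only needs $\langle\wt S_i^{t_n}(\bar u_n),v_n\rangle=0$ for $v_n\in T_{u_{n,i}}(\mathscr S_i)\cap E_{k_n,i}$, and this is exactly what we have. Since the system decouples at $t=0$, the componentwise argument of Lemma \ref{lem:convergence} applies to the $i$-th slot (completing $u_n$ by the fixed solutions $\omega_j\in E_{3,j}$ in the remaining slots if one prefers the literal statement), with $\cW=\cW_i$ and $\mathrm{dist}(\bar u_n,\cW)=0$. It yields, along a subsequence, strong convergence $u_n\to u$ in $\Hr$ with $u$ a critical point of $\cJ_i$ on $\mathscr S_i$. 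By Proposition \ref{nondeg} the only such point is $\omega_i$, so $u=\omega_i$ and hence $v_n=\sigma_i(u_n)\to\sigma_i(\omega_i)=0$ strongly in $T_{\omega_i}(\mathscr S_i)$.

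It remains to contradict $v_n\ne 0$, and here the structure of $F_{k,i}$ is decisive. Let $L_i:=\mathbf H\Phi_i(0)$; by Proposition \ref{nondeg} it is an invertible self-adjoint operator on $T_{\omega_i}(\mathscr S_i)$, negative definite on $F_i^-=F_{2,i}$ ($\dim F_i^-=1$) and, since by invertibility its positive spectrum does not accumulate at $0$, bounded below by some $\delta>0$ on $F_i^+$. Writing $\nabla\Phi_i(v)=L_iv+R(v)$ with $\|R(v)\|=o(\|v\|)$ as $v\to 0$ and letting $\Pi_n$ be the orthogonal projection onto $F_{k_n,i}$, criticality reads $\Pi_n\nabla\Phi_i(v_n)=0$; dividing by $\|v_n\|$ and setting $\hat v_n:=v_n/\|v_n\|$ gives $\Pi_nL_i\hat v_n\to 0$. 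Now decompose $\hat v_n=\hat v_n^-+\hat v_n^+$ along $T_{\omega_i}(\mathscr S_i)=F_i^-\oplus F_i^+$; because $F_{k_n,i}=F_i^-\oplus\wt F_{k_n,i}$ with $\wt F_{k_n,i}\subset F_i^+$, we have $\hat v_n^-\in F_i^-\subset F_{k_n,i}$ and $\hat v_n^+\in\wt F_{k_n,i}$. Projecting $\Pi_nL_i\hat v_n$ onto $F_i^-$ (which lies in $F_{k_n,i}$) recovers $L_i\hat v_n^-$, so $L_i\hat v_n^-\to 0$ and hence $\hat v_n^-\to 0$; pairing $\Pi_nL_i\hat v_n$ with $\hat v_n^+\in F_{k_n,i}$ gives $\langle L_i\hat v_n^+,\hat v_n^+\rangle=\langle\Pi_nL_i\hat v_n,\hat v_n^+\rangle\to 0$, whence $\delta\|\hat v_n^+\|^2\le\langle L_i\hat v_n^+,\hat v_n^+\rangle\to 0$ and $\hat v_n^+\to 0$. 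Thus $\hat v_n\to 0$, contradicting $\|\hat v_n\|=1$, and the lemma follows.

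The main obstacle is the passage to \emph{strong} convergence $v_n\to 0$ in the second step: because $\Hr\hookrightarrow L^2(\rn)$ is not compact, $\nabla\Phi_i$ is not of the form identity-minus-compact and weak convergence of the $u_n$ would not suffice to run the final expansion. This is exactly the difficulty resolved inside Lemma \ref{lem:convergence} via the lower bound $\kappa_i^0(u_n)>0$ and Proposition \ref{lem:H_bounds}, which upgrade weak to strong convergence. Once $v_n\to 0$ is known, the finite-dimensional argument is elementary, provided one uses the compatibility $F_i^-\subset F_{k,i}$ and $\wt F_{k,i}\subset F_i^+$ of the approximating spaces with the spectral splitting of the Hessian, which is precisely what lets us separate the unstable and stable directions despite the projections $\Pi_n$.
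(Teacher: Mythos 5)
Your proof is correct, and it rests on the same two pillars as the paper's: the Galerkin compactness of Lemma \ref{lem:convergence} combined with the uniqueness of the critical point $\omega_i$ (Proposition \ref{nondeg}), and the compatibility of the spaces $F_{k,i}$ with the spectral splitting $T_{\omega_i}(\mathscr S_i)=F_i^-\oplus F_i^+$ of $L_i=\mathbf{H}\Phi_i(0)$. But the architecture is reversed and the local step is executed differently, so the route is genuinely distinct. The paper goes local-to-global: it first shows, via a uniform contraction estimate for $K_n(w)=w-A_{k_n,i}^{-1}P_{k_n,i}\nabla\Phi_i(w)$ (which uses continuity of $\Phi_i''$ near $0$ and the uniform bound $\|A_{k,i}^{-1}\|\le a^{-1}$), that a \emph{fixed} neighborhood $\cO_i$ of $0$ contains no nonzero critical point of $\Phi_i|_{F_{k,i}}$ for all large $k$; only then does it invoke Lemma \ref{lem:convergence} and Proposition \ref{nondeg} to exclude critical points in $\cV\smallsetminus\cO_i$. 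You go global-to-local: compactness first forces any sequence of nonzero critical points to converge strongly to $0$, and then a normalized-sequence argument at $0$ --- needing only differentiability of $\nabla\Phi_i$ at $0$ together with the invariance and uniform definiteness of $L_i$ on $F_i^\pm$ --- produces the contradiction. Your two-step splitting ($\hat v_n^-\to 0$ by projecting onto $F_i^-\subset F_{k_n,i}$, then $\hat v_n^+\to 0$ by testing against $\hat v_n^+$) is in substance the same linear algebra by which the paper proves $A_{k,i}$ uniformly invertible, deployed asymptotically rather than quantitatively. The trade-off: the paper's contraction yields a $k$-uniform critical-point-free ball around $0$ (more quantitative information), whereas your argument requires slightly weaker smoothness input but must first secure the strong convergence $v_n\to 0$, exactly as you observed. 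Two details you handled carefully and should keep explicit: criticality of $v_n$ for $\Phi_i|_{F_{k_n,i}}$ literally gives $\langle\wt S_i^0(u_n),\xi\rangle=0$ for $\xi\in T_{u_n}(\mathscr S_i)\cap E_{k_n,i}$ (equivalently $P_{k_n,i}\wt S_i^0(u_n)=0$, since also $\langle\wt S_i^0(u_n),u_n\rangle=0$) rather than $P_{k_n,i}S_i^0(u_n)=0$, and the proof of Lemma \ref{lem:convergence} uses precisely this weaker identity; and completing $\bar u_n$ with $\omega_j$ in the remaining slots is legitimate because $\omega_j\in E_{3,j}\subset E_{k_n,j}$ and $S_j^0$ vanishes there.
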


\begin{proof}
Set $A_i:=\mathbf{H}\Phi_i(0)$ and $A_{k,i}:=P_{k,i}\circ A_i|_{F_{k,i}}:F_{k,i}\to F_{k,i}$. Note that $A_{i}(F_{k,i})$ is orthogonal to $E_{1,i}$ because it is contained in $T_{\omega_i}(\mathscr S_i)$ and, therefore, the range of $A_{k,i}$ is indeed in $F_{k,i}$.

As $A_i(F_i^-)\subset F_i^-$ and $A_i(F_i^+)\subset F_i^+$, $A_{k,i}$ is an isomorphism. Indeed, if $u\in F_{k,i}$, $u=v+w$ with $v\in F_i^-=F_{2,i}$, $w\in \wt F_{k,i}$ and $A_{k,i}u=0$, then $0=\langle A_{k,i}u,v\rangle=\langle A_iv,v\rangle$ and $0=\langle A_{k,i}u,w\rangle=\langle A_iw,w\rangle$ and, as a consequence, $v=0$ and $w=0$. This shows that $A_{k,i}$ is an isomorphism. Moreover, as there exists $a>0$ such that $\langle A_iv,v\rangle\le -a\|v\|^2$ for all $v\in F_i^-$ and $\langle A_iv,v\rangle\ge a\|v\|^2$ for all $v\in F_i^+$, we have that $\|A_{k,i}^{-1}\|\leq a^{-1}$ for all $k\geq 3$.

Next we show that there exist $k_0\in\n$ and a neighborhood $\cO_i$ of $0$ in $T_{\omega_i}(\mathscr S_i)$ such that, for all $k\ge k_0$, $0$ is the only critical point of $\Phi_i|_{F_{k,i}}$ in $\cO_i\cap F_{k,i}$. 

Arguing by contradiction, assume there are subsequences $(k_n)$ and $(v_n)$ such that $k_n\to\infty$, $v_n\in F_{k_n,i}\smallsetminus\{0\}$, \ $v_n\to 0$ \ and \ $P_{k_n,i}\nabla\Phi_i(v_n)=0$. Since $A_{k_n,i}v_n = A_{k_n,i}v_n-P_{k_n,i}\nabla\Phi_i(v_n)$, we have
\begin{equation} \label{fixed}
v_n = v_n - A_{k_n,i}^{-1}P_{k_n,i}\nabla\Phi_i(v_n).
\end{equation}
Let $K_n:F_{k_n,i}\to F_{k_n,i}$ be given by $K_n(w):=w - A_{k_n,i}^{-1}P_{k_n,i}\nabla\Phi_i(w)=A_{k_n,i}^{-1}(A_{k_n,i}w-P_{k_n,i}\nabla\Phi_i(w))$. As $\Phi_i''$ is continuous, there exists $\delta>0$ such that 
$$\|A_{k_n,i}-P_{k_n,i}\circ\mathbf{H}\Phi_i(w)\|=\|P_{k_n,i}\circ\mathbf{H}\Phi_i(0)-P_{k_n,i}\circ\mathbf{H}\Phi_i(w)\| \le \tfrac{a}{2}\qquad\text{whenever \ }\|w\|\le\delta.$$
Hence,
\[
\|K'_n(w)\| = \|A_{k_n,i}^{-1}\circ(A_{k_n,i}-P_{k_n,i}\circ\mathbf{H}\Phi_i(w))\| \le \tfrac12 \qquad \text{for all \ } \|w\|\le\delta,
\] 
and therefore 
\[
\|K_n(v_n)\| = \left\|\int_0^1\frac{d}{dt}K_n(tv_n)\,dt\right\| \le \int_0^1\|K_n'(tv_n)v_n\|\,dt \le \tfrac12\|v_n\|
\]
for all $n$ large enough. So $\|v_n\|\le \frac12\|v_n\|$ according to \eqref{fixed}. Thus $v_n=0$, a contradiction. Hence $k_\cV$ and $\cO_i$ exist as claimed.

We complete the proof by showing that, after possibly taking a larger $k_\cV$, $\Phi_i|_{F_{k,i}}$ does not have critical points in $(\cV\smallsetminus\cO_i)\cap F_{k,i}$. Assuming the contrary, we find subsequences $(k_n)$ and $(v_n)$ with $k_n\to\infty$, $v_n\in (\cV\smallsetminus\cO_i)\cap F_{k,i}$ and $P_{k_n,i}\nabla\Phi_i(v_n)=0$. Then $u_n := \sigma^{-1}_i(v_n)\in\mathscr S_i\cap E_{k_n,i}$ and $u_n$ is a critical point of $\cJ_i|_{E_{k_n,i}}$. Hence, it is also a critical point of $I_i|_{E_{k_n,i}}$, i.e., $P_{k_n,i}S_i^0(u_n)=0$, where $S_i^0$ is defined in \eqref{sit}-\eqref{S}. It follows from Lemma \ref{lem:convergence} that there exists a critical point $u\in \sigma^{-1}(\ol{\cV}_i\smallsetminus\cO_i)$ of $\cJ_i$. According to Proposition \ref{nondeg}, $u=\omega_i$ which is impossible because $u\not\in \sigma^{-1}(\cO_i)$.   
\end{proof}

Let $\bar\omega := (\omega_1,\ldots,\omega_\ell)$ and $\Phi:T_{\bar\omega}(\mathscr T)\to\r$ be the functional given by $\Phi(\bar v) := \Phi_1(v_1)+\cdots+\Phi_\ell(v_\ell)$ where  $\bar v:=(v_1,\ldots,v_\ell)\in T_{\bar\omega}(\mathscr T)$. Then, $\nabla(\Phi|_{F_k})=(P_k\circ\nabla\Phi)|_{F_k}$ and
\[
(P_k\circ\nabla\Phi)|_{F_k}(\bar v) = \left((P_{k,1}\circ\nabla\Phi_1)|_{F_{k,1}}(v_1),\ldots, (P_{k,\ell}\circ\nabla\Phi_\ell)|_{F_{k,\ell}}(v_\ell)\right),
\]
where $F_k:=F_{k,1}\times\cdots\times F_{k,\ell}\subset T_{\bar\omega}(\mathscr T)$ and $P_k:\cH\to E_k:=E_{k,1}\times\cdots\times E_{k,\ell}$ is the orthogonal projection.

\begin{corollary} \label{cor:degree}
For each open bounded neighborhood $\cV\subset\sigma(\mathscr{U}^0)$ of $0$ in $T_{\bar\omega}(\mathscr T)$ there exists $k_\cV\in\n$ such that the Brouwer degree $\deg((P_k\circ\nabla\Phi)|_{F_k}, \cV_0\cap F_{k}, 0) = (-1)^\ell$  for all $k\ge k_\cV$ and every open neighborhood $\cV_0$ of $0$ such that $\cV_0\subset\cV$. 
\end{corollary}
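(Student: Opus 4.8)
The plan is to exploit the product structure of $\Phi$ and to identify the degree with the local degree of a gradient field at a nondegenerate zero. First I would record that, because $\Phi(\bar v)=\Phi_1(v_1)+\cdots+\Phi_\ell(v_\ell)$, the map $g_k:=(P_k\circ\nabla\Phi)|_{F_k}$ splits as the Cartesian product of the maps $g_{k,i}:=(P_{k,i}\circ\nabla\Phi_i)|_{F_{k,i}}$ displayed just before the statement. Each $g_{k,i}$ sends $F_{k,i}$ into $F_{k,i}$ (its range is orthogonal to $E_{1,i}$, exactly as for the linear model $A_{k,i}$ in the proof of Lemma \ref{lem:nonexistence}), so $g_k$ maps $F_k=F_{k,1}\times\cdots\times F_{k,\ell}$ into itself and, once $F_k$ carries its standard orientation, the Brouwer degree is defined.

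Second I would localize the zeros. Let $\pi_i\colon T_{\bar\omega}(\mathscr T)\to T_{\omega_i}(\mathscr S_i)$ be the coordinate projection. Since $\cV\subset\sigma(\mathscr U^0)=\sigma_1(\cW_1)\times\cdots\times\sigma_\ell(\cW_\ell)$, each $\pi_i(\cV)$ is an open bounded neighborhood of $0$ contained in $\sigma_i(\cW_i)$; choosing a slightly larger open neighborhood $\cV_i\subset\sigma_i(\cW_i)$ with $\overline{\pi_i(\cV)}\subset\cV_i$ and applying Lemma \ref{lem:nonexistence} to each $\cV_i$ yields $k_{\cV_i}$ such that $0$ is the only critical point of $\Phi_i|_{F_{k,i}}$ in $\cV_i\cap F_{k,i}$ for $k\ge k_{\cV_i}$. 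Set $k_\cV:=\max_i k_{\cV_i}$. If $\bar v=(v_1,\ldots,v_\ell)\in\overline{\cV}\cap F_k$ is a zero of $g_k$, then for each $i$ we have $v_i\in\cV_i\cap F_{k,i}$ and $g_{k,i}(v_i)=0$, whence $v_i=0$ by the lemma; thus $\bar 0$ is the unique zero of $g_k$ in $\overline{\cV}\cap F_k$. Consequently, for every open neighborhood $\cV_0$ of $0$ with $\cV_0\subset\cV$, the only zero of $g_k$ in $\cV_0\cap F_k$ is the interior point $\bar 0$, so $0\notin g_k(\partial(\cV_0\cap F_k))$, the degree is well defined, and by excision it is independent of $\cV_0$ and equals the local degree of $g_k$ at $\bar 0$.

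Third I would compute this local degree. Since $\nabla\Phi(\bar v)=(\nabla\Phi_1(v_1),\ldots,\nabla\Phi_\ell(v_\ell))$, the differential of $g_k$ at $\bar 0$ is the block-diagonal operator $A_k:=\mathrm{diag}(A_{k,1},\ldots,A_{k,\ell})$, where $A_{k,i}=P_{k,i}\circ\mathbf{H}\Phi_i(0)|_{F_{k,i}}$ is precisely the operator analyzed in the proof of Lemma \ref{lem:nonexistence}. There it is shown that $A_{k,i}$ is a self-adjoint isomorphism of $F_{k,i}$ which, with respect to the orthogonal splitting $F_{k,i}=F_i^-\oplus\wt F_{k,i}$, is negative definite on $F_i^-$ and positive definite on $\wt F_{k,i}$; since $\dim F_i^-=1$ by Proposition \ref{nondeg}, $A_{k,i}$ has exactly one negative eigenvalue and $\det A_{k,i}<0$. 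As $\bar 0$ is a nondegenerate zero of $g_k$, its local degree equals the sign of $\det A_k=\prod_{i=1}^\ell\det A_{k,i}$, so it equals $\prod_{i=1}^\ell(-1)=(-1)^\ell$, which is the asserted value.

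The point demanding care is the localization step: one must rule out spurious zeros of $g_k$ near $\partial\cV$, so that the degree is well defined and takes the same value for \emph{every} $\cV_0\subset\cV$. This is exactly what Lemma \ref{lem:nonexistence} supplies, uniformly in $k\ge k_\cV$, once it is applied to neighborhoods $\cV_i$ strictly containing $\overline{\pi_i(\cV)}$. The remaining ingredients — the multiplicativity of the determinant across the block-diagonal Jacobian and the identification of the local Brouwer degree at a nondegenerate zero with the sign of the Jacobian determinant — are standard.
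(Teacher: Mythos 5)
Your strategy coincides with the paper's: the coordinatewise splitting of $g_k:=(P_k\circ\nabla\Phi)|_{F_k}$, localization of its zeros via Lemma \ref{lem:nonexistence} applied one factor at a time, and computation of the local degree at $\bar 0$ as the sign of the determinant of the block-diagonal linearization $\mathrm{diag}(A_{k,1},\ldots,A_{k,\ell})$, each block an isomorphism with exactly one negative eigenvalue; this last step is correct and is exactly the paper's argument. The gap is in your localization step. You require open sets $\cV_i$ with $\overline{\pi_i(\cV)}\subset\cV_i\subset\sigma_i(\cW_i)$, and such sets need not exist: the hypothesis $\cV\subset\sigma(\mathscr{U}^0)$ only gives $\pi_i(\cV)\subset\sigma_i(\cW_i)$, and nothing prevents the closure of the bounded open set $\pi_i(\cV)$ from meeting $\partial\big(\sigma_i(\cW_i)\big)$ (this already happens for $\cV=\sigma(\mathscr{U}^0)\cap B_R(0)$ with $R$ large). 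Moreover the obstruction is not removable with the tools at hand: a point $v\in\partial\big(\sigma_i(\cW_i)\big)$ corresponds to $u=\sigma_i^{-1}(v)\in\mathscr{S}_i$ with $\kappa_i=0$, and Lemma \ref{lem:nonexistence} --- whose proof rests on Lemma \ref{lem:convergence} --- needs $\kappa_i>0$, i.e.\ membership in $\cW_i$, so it gives no information about zeros on or near that boundary. Hence your assertion that $\bar 0$ is the only zero of $g_k$ in the \emph{closed} set $\overline{\cV}\cap F_k$ is not justified.

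What that assertion was meant to buy --- well-definedness of the degree for every open $\cV_0\subset\cV$, including those (such as $\cV_0=\cV$ itself) whose boundary meets $\partial\cV$ --- is a genuine point, but it is one the paper's own proof does not settle either: the paper chooses $\cV_i$ with $\cV\subset\cV_1\times\cdots\times\cV_\ell$ (no closures; in effect $\cV_i=\pi_i(\cV)$), concludes only that $\bar 0$ is the unique critical point of $\Phi$ in the \emph{open} set $\cV\cap F_k$, and then appeals to excision; the boundary non-vanishing actually used later (on $\partial D^t\cap E_k$) is established separately, by a compactness argument via Lemma \ref{lem:convergence}, in the proof of Theorems \ref{mainthm} and \ref{mainthm2}. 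So the repair of your write-up is not to strengthen the localization but to weaken it: take $\cV_i:=\pi_i(\cV)$, conclude exactly as the paper does that $\bar 0$ is the unique zero of $g_k$ in $\cV\cap F_k$, and assert the degree identity for those $\cV_0$ for which the degree is defined --- e.g.\ all $\cV_0$ with $\overline{\cV_0}\subset\cV$, or more generally all $\cV_0$ such that $g_k\neq 0$ on $\partial(\cV_0\cap F_k)$. With this modification your proof and the paper's are the same.
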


Basic properties of Brouwer's degree may be found e.g. in \cite{am, dm, w}.

\begin{proof}
Choose bounded $\cV_i\subset\sigma_i(\cW_i)$ in such a way that $\cV\subset \cV_1\times\cdots\times \cV_\ell$. According to Lemma \ref{lem:nonexistence}, we may find $k_\cV$ such that $0$ is the only critical point of $\Phi_i$ in $\cV_i\cap F_{k,i}$ for each $i$ and every $k\ge k_\cV$, and hence $0$ is the only critical point for  $\Phi$ in $\cV\cap F_k$. In the proof of Lemma \ref{lem:nonexistence} we have shown that, for each $i$, \ $A_{k,i}:=(P_{k,i}\circ \mathbf{H}\Phi_i(0))|_{F_{k,i}}:F_{k,i}\to F_{k,i}$ is an isomorphism having exactly one negative eigenvalue. So the linear mapping $(A_{k,1},\ldots, A_{k,\ell})$ has $\ell$ negative eigenvalues.  Since, by the excision property, the degree is independent of the choice of $\cV_0$, this implies that $\deg((P_k\circ\nabla\Phi)|_{F_k}, \cV_0\cap F_{k}, 0) = (-1)^\ell$, as claimed.
\end{proof}

\section{Proof of Theorems \ref{mainthm} and \ref{mainthm2}} \label{pfmain}

Let $\sigma_i: \mathscr{S}_i\smallsetminus \{-\omega_i\} \to T_{\omega_i}(\mathscr S_i)$ be the stereographic projection defined in \eqref{eq:stereographic} and set $\sigma(\bar v) := (\sigma_1(v_1),\ldots,\sigma_\ell(v_\ell))\in T_{\omega_1}(\mathscr S_1)\times\cdots\times T_{\omega_\ell}(\mathscr S_{\ell})= T_{\bar\omega}(\mathscr T)$ if $v_i\in\mathscr{S}_i\smallsetminus \{-\omega_i\}$ for all $i$.  

\begin{lemma}
$\sigma(\mathscr{U}^t\cap\cW)$ is bounded in $\cH$ uniformly with respect to $t$ for any bounded subset $\cW$ of~$\cH$.
\end{lemma}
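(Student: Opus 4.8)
The plan is to reduce the assertion to a single quantitative fact: the denominators occurring in the stereographic projection stay bounded away from $0$, uniformly in $t$ and over $\cW$. The projection \eqref{eq:stereographic} blows up only as its argument approaches the pole $-\omega_i$, and the heuristic reason this cannot happen on $\mathscr{U}^t$ is that evaluating \eqref{kappa} at $v_i=-\omega_i$ (where $v_i^+=0$) gives $r_i^2\kappa_i^t=-\irn|\nabla\omega_i|^2<0$, so $-\omega_i$ lies strictly outside $\mathscr{U}^t$ for every $t$. To make this uniform, note first that for $\bar v=(v_1,\dots,v_\ell)\in\mathscr{T}$,
\[
\|\sigma_i(v_i)\|\le \frac{r_i^2\|v_i\|+\big|\irn\omega_iv_i\big|\,\|\omega_i\|}{r_i^2+\irn\omega_iv_i}.
\]
Since $\cW$ is bounded, $\|v_i\|$ is bounded on $\cW$, and by Cauchy--Schwarz $\big|\irn\omega_iv_i\big|\le|\omega_i|_2|v_i|_2=r_i^2$, so the numerator is bounded. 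It therefore suffices to produce $c>0$ with $r_i^2+\irn\omega_iv_i\ge c$ for all $\bar v\in\mathscr{U}^t\cap\cW$ and all $t$; equivalently, $\irn\omega_iv_i$ must stay away from its minimal value $-r_i^2$, attained only at $v_i=-\omega_i$.

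I would argue by contradiction. If no such $c$ exists, there are $t_n\in[0,1]$ and $\bar v_n\in\mathscr{U}^{t_n}\cap\cW$ with $\|\sigma_i(v_{n,i})\|\to\infty$ for some fixed $i$ (after passing to a subsequence). As the numerator is bounded and the denominator is nonnegative, this forces $\irn\omega_iv_{n,i}\to-r_i^2$. Because $\cW$ is bounded, up to a further subsequence $v_{n,i}\rh v_i$ weakly in $\Hr$, and since $\omega_i\in L^2(\rn)$ we get $\irn\omega_iv_{n,i}\to\irn\omega_iv_i=-r_i^2$. Combined with $|v_i|_2\le r_i$ and the equality case of Cauchy--Schwarz, this forces $|v_i|_2=r_i$ and $v_i=-\omega_i$; then $|v_{n,i}|_2=r_i=|v_i|_2$ together with weak $L^2$-convergence upgrades to $v_{n,i}\to-\omega_i$ strongly in $L^2(\rn)$, and by the radial compactness lemma \cite[Compactness Lemma 2]{st} also in $L^q(\rn)$ for $2<q<\frac{2N}{N-2}$.

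Finally I would exploit the defining condition $\kappa_i^{t_n}(\bar v_n)>0$ of $\mathscr{U}^{t_n}$. Writing $r_i^2\kappa_i^{t_n}(\bar v_n)=\irn f_i^{t_n}(\bar v_n)v_{n,i}-\irn|\nabla v_{n,i}|^2$, I note that $v_{n,i}^+\to(-\omega_i)^+=0$ in every $L^q(\rn)$ with $2<q<\frac{2N}{N-2}$. Strict Sobolev-subcriticality of the exponents $p+1$ and $\alpha_{ij}+\beta_{ij}+1\le p+1$ then yields $\irn f_i^{t_n}(\bar v_n)v_{n,i}\to0$: the self-term $\mu_i\irn(v_{n,i}^+)^{p+1}\to0$ directly, and the coupling terms are estimated by Hölder, their $v_{n,i}^+$-factor forcing them to vanish (the factors $t_n\in[0,1]$ being harmless). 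On the other hand, weak lower semicontinuity of the Dirichlet energy gives $\liminf_n\irn|\nabla v_{n,i}|^2\ge\irn|\nabla\omega_i|^2>0$. Hence $\limsup_n r_i^2\kappa_i^{t_n}(\bar v_n)\le-\irn|\nabla\omega_i|^2<0$, contradicting $\kappa_i^{t_n}(\bar v_n)>0$, and the uniform lower bound on the denominators follows.

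The main obstacle is precisely this last step. Since $v_{n,i}$ converges only weakly in $\Hr$, the Dirichlet energy is not continuous and can be controlled only from below; the argument succeeds because this lower semicontinuity pushes $\kappa_i^{t_n}$ in the favorable (negative) direction, while strict subcriticality ensures that the nonlinear and coupling terms — the only positive contributions to $\kappa_i^{t_n}$ — vanish in the limit together with $v_{n,i}^+$. The delicate point to verify carefully is thus the vanishing of $\irn f_i^{t_n}(\bar v_n)v_{n,i}$, which hinges on the strict inequalities $p+1<\frac{2N}{N-2}$ and $\alpha_{ij}+\beta_{ij}+1<\frac{2N}{N-2}$ (any finite exponent when $N=2$) that make the embedding into the relevant $L^q$ compact.
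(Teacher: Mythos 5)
Your proof is correct and follows essentially the same strategy as the paper's: reduce the claim to showing the denominator $r_i^2+\irn\omega_iv_i$ in \eqref{eq:stereographic} stays uniformly away from $0$ on $\mathscr{U}^t\cap\cW$, then argue by contradiction, identify the weak limit of a bad sequence as $-\omega_i$ (the paper does this via a.e.\ convergence, you via the equality case in Cauchy--Schwarz, an equivalent step), and derive $\limsup_n\kappa_i^{t_n}(\bar v_n)\le -r_i^{-2}\irn|\nabla\omega_i|^2<0$ from the vanishing of $\irn f_i^{t_n}(\bar v_n)v_{n,i}$ and weak lower semicontinuity of the Dirichlet integral, contradicting $\kappa_i^{t_n}>0$. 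Your write-up is in fact slightly more explicit than the paper's on the Hölder/subcriticality step that forces the nonlinear terms to vanish.
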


\begin{proof}
Let $\cW$ be bounded in $\cH$. It suffices to show that $\cW_i^t:=\{u_i:\bar u\in \mathscr{U}^t\cap\cW\}$ is $L^2$-bounded away from $-\omega_i$ uniformly in $t$ for each $i=1,\ldots,\ell$. Indeed, if this is the case, then the angle between $u_i$ and $-\omega_i$ is uniformly bounded away from 0, so
\[
r_i^2+\irn\omega_iu_i = |\!-\omega_i|_2^2\,|u_i|_2^2- \irn(-\omega_i)u_i \ge a > 0
\]
for every $u_i\in\cW_i^t$ and, as a consequence, 
\begin{equation*}
\|\sigma_i(u_i)\|=\frac{\|r_i^2u_i-\big(\irn \omega_iu_i\big)\,\omega_i\|}{r_i^2+\irn \omega_iu_i}\leq C\|u_i\|\qquad\forall u_i\in\cW_i^t.
\end{equation*}
Arguing by contradiction, we find $\bar u_n\in\mathscr{U}^{t_n}\cap\cW$ such that $u_{n,i}\to -\omega_i$ in $L^2(\rn)$. Passing to a subsequence, $t_n\to t$, $u_{n,i}\rh u_i$ weakly in $\Hr$, $u_{n,i}\to u_i$ strongly in $L^q(\rn)$ for all $2< q < \frac{2N}{N-2}$ and $u_{n,i}\to u_i$ a.e. in $\rn$. Hence $u_i=-\omega_i$. Since $f_i^t(\bar u)=0$ if $u_i\le 0$ (in particular, if $u_i=-\omega_i$), it follows that
\begin{align*}
0&\le \limsup_{n\to\infty} \kappa_i^{t_n}(\bar u_n) = \limsup_{n\to\infty} r_i^{-2}\left(\irn f_i^{t_n}(\bar u_n)u_{n,i} - \irn|\nabla u_{n,i}|^2\right)\\
&\leq\limsup_{n\to\infty} \left(-r_i^{-2} \irn|\nabla u_{n,i}|^2\right) \le -r_i^{-2}\irn|\nabla\omega_i|^2 < 0.
\end{align*}
This shows that $\cW_i^t$ is $L^2$-bounded away from $-\omega_i$ uniformly in $t$ for every $i=1,\ldots,\ell$.
\end{proof}
\medskip

\begin{proof}[Proof of Theorems \ref{mainthm} and \ref{mainthm2}]
According to Lemma \ref{lem:positive} or Lemma \ref{lem:positive2}, it suffices to show that \eqref{eq:system} with $t=1$ has a solution in $\mathscr{U}^1$. 

Consider the map
\begin{equation} \label{gtk}
G^t_k(\bar v) := (\sigma'(\sigma^{-1}(\bar v))\circ P_k\circ S^t\circ\sigma^{-1})(\bar v), \qquad \bar v\in\sigma(\mathscr{U}^t\cap E_k)\subset T_{\bar\omega}(\mathscr T)\cap F_k,
\end{equation}
where $S^t$ is defined in \eqref{S} and, as in the previous section, $F_k:=F_{k,1}\times\cdots\times F_{k,\ell}\subset T_{\bar\omega}(\mathscr T)$, $E_k:=E_{k,1}\times\cdots\times E_{k,\ell}\subset \cH$ and $P_k:\cH\to E_k$ is the orthogonal projection. Note that
\[
G^t_k(\bar v)=0 \quad \text{if and only if}\quad P_kS^t(\bar u)=0,
\]
where $\bar u = \sigma^{-1}(\bar v)$. According to Proposition \ref{prop:S} we must show that there exists $\bar u\in\cU^1$ such that $S^1(\bar u)=0$.

For $v_i\in F_{k,i}$, let $L_{k,i}(v_i):F_{k,i}\to F_{k,i}$ be the unique linear operator satisfying 
$$\langle L_{k,i}(v_i)z,w\rangle=\langle (\sigma_i^{-1})'(v_i)z,\,(\sigma_i^{-1})'(v_i)w\rangle\qquad\forall z,w\in F_{k,i}.$$
Recall that $\Phi_i = \cJ_i\circ\sigma_i^{-1}$. Then, for $\bar v$ as above and any $w\in F_{k,i}$ we have
\begin{align*}
\langle\nabla\Phi_i(v_i),w\rangle &=\Phi_i'(v_i)w=\cJ'_i(\sigma_i^{-1}(w))[(\sigma_i^{-1})'(v_i)w]=\langle\nabla\cJ_i(\sigma_i^{-1}(w)),(\sigma_i^{-1})'(v_i)w\rangle \\
&=\langle P_{k,i}S_i^0(\sigma_i^{-1}(v_i)),\,(\sigma_i^{-1})'(v_i)w\rangle=\langle (\sigma_i^{-1})'(v_i)[G_{k,i}^0(v_i)],\,(\sigma_i^{-1})'(v_i)w\rangle \\
&=\langle L_{k,i}(v_i)G^0_{k,i}(v_i),w\rangle.
\end{align*}
Therefore, $(P_{k,i}\circ\nabla\Phi_i)|_{F_{k,i}}(v_i)=L_{k,i}(v_i)G^0_{k,i}(v_i)$. Since $(\sigma_i^{-1})'(0)$ is twice the identity, the map $(s,v_i)\mapsto L_{k,i}(sv_i)G^0_{k,i}(v_i)$, $s\in[0,1]$, is a homotopy between $4G^0_{k,i}$ and $(P_{k,i}\circ\nabla\Phi_i)|_{F_{k,i}}$. It follows from Corollary \ref{cor:degree} that
\[
\deg(G_k^0, \cV\cap F_k,0) = (-1)^\ell
\] 
for any open bounded set $\cV\subset\sigma(\mathscr{U}^0)$ such that $0\in \cV$ and for all $k$ large enough. 

By Corollary \ref{cor:bound} we may choose $R>0$ such that all solutions $\bar u\in\mathscr{U}^t$ to \eqref{eq:constraint}-\eqref{eq:system} are contained in the open ball $B_R(0)$ of radius $R$ in $\cH$. Then $\sigma(B_R(0)\cap \mathscr{U}^t)$ is $H^1$-bounded in $T_{\bar\omega}(\mathscr T)$, uniformly in $t$. Let 
\[
D^t := B_R(0)\cap \mathscr{U}^t \quad \text{and} \quad \mathscr{U} := \{(u,t)\in \mathscr{T}\times[0,1]: u\in D^t\}.
\]
It is easy to see that $\mathscr{U}$ is open in $\mathscr{T}\times[0,1]$. We claim that there exists $k_0$ such that $P_kS^t(\bar u)\ne 0$ for all $\bar u\in \partial D^t\cap E_k$, $t\in[0,1]$ and $k\ge k_0$. Assuming the contrary, we find $t_n\to t$, $k_n\to\infty$ and $\bar u_n\in \partial D^{t_n}\cap E_{k_n}$ such that $P_{k_n}S^{t_n}(\bar u_n) = 0$. Since $\mathrm{dist}(\bar u_n, \partial D^{t})\to 0$, it follows from Lemma \ref{lem:convergence} that there exists $\bar u\in \partial D^{t}$ such that $S^{t}(\bar u)=0$. According to Lemma \ref{lem:positive} or Lemma \ref{lem:positive2} and the choice of $R$, this is impossible. Hence $k_0$ exists as claimed. 

Set $\cV_k^t := \sigma(D^t)\cap F_k$ and 
\[
\cV_k := \{(v,t)\in F_k\times[0,1]: v\in \cV_k^t\}.
\]
Then $G_k^t(\bar v)\ne 0$ for all $\bar v\in\partial\cV_k^t$ and $t\in[0,1]$. It follows from the extended homotopy invariance property of the degree (see e.g. \cite[Theorem 4.1]{am} or \cite[Theorem 2.1.1]{dm}) applied to $(G_k^t,t)$ on $\cV_k$ that the degree $\deg(G_k^t, \cV_k^t,0)$ is independent of $t$ whenever $k\ge k_0$. Hence,
\[
\deg(G_k^1, \cV_k^1,0) = \deg(G_k^0, \cV_k^0,0) = (-1)^\ell
\]
and, therefore, $G_k^1(\bar v_k)=0$ for some $\bar v_k\in \cV_k^1$ and every $k\ge k_0$. So setting $\bar u_k := \sigma^{-1}(\bar v_k)$, we have 
\[
P_kS^1(\bar u_k) = 0, \quad \bar u_k\in D^1\cap E_k,\quad k\ge k_0.
\]
Applying Lemma \ref{lem:convergence} again, we obtain $\bar u\in \ol{D^1}$ such that $S^1(\bar u)=0$  (a posteriori, $\bar u\in D^1$ by Lemma \ref{lem:positive} or Lemma \ref{lem:positive2} and the choice of $R$). It follows that $(\bar\kappa,\bar u)$ with $\bar\kappa:=\bar\kappa^1(\bar u)$ is a solution to \eqref{eq:constraint}-\eqref{eq:system} for $t=1$ and hence to \eqref{eq:mainsystem}-\eqref{eq:constraint}. This completes the proof.
\end{proof}

\appendix
\section{The covariant Hessian} \label{sec:hessian}

In this appendix we collect some facts on the relation between the Hessian of a functional on a Hilbert space and the covariant Hessian of a related function on a Hilbert submanifold, that play an important role in the proof of our main result.

Let $H$ be a Hilbert space, $M$ a smooth Hilbert submanifold of $H$ and $\mathscr X(M)$ be the space of $\cC^1$ tangent vector fields on $M$. Given $X,Y\in\mathscr X(M)$, the \emph{covariant derivative} of $Y$ in the direction of $X$ is the vector field $\nabla_XY\in\mathscr X(M)$ whose value at a point $u\in M$ is defined by
$$\nabla_XY|_u:=\pi_u(D\bar Y(u)[X(u)]),$$
where $\bar Y:H\to H$ is a local extension of $Y$ at $u$, $D\bar Y$ is the derivative of $\bar Y$, and $\pi_u:H\to T_uM$ is the orthogonal projection onto the tangent space of $M$ at $u$.

If $J:M\to\r$ is a $\cC^2$-function, its \emph{gradient} is the vector field $\nabla J\in\mathscr X(M)$ given by
$$
\langle \nabla J,X\rangle=XJ\qquad\text{for every \ }X\in\mathscr X(M),
$$
where $XJ$ is the derivative of $J$ in the direction of $X$. So, if $\bar J:H\to\r$ is a $\cC^2$ extension of $J$, then $\nabla J|_u=\pi_u(\nabla\bar J(u))$. The \emph{covariant Hessian} of $J$ is the $2$-tensor defined by
$$
\nabla^2J[X,Y]:=\langle \nabla_X\nabla J,Y\rangle\qquad\text{for every \ }X,Y\in\mathscr X(M).
$$

\begin{example} \label{example_hessian}
\emph{
If $M=H$ and $I:H\to\r$ is of class $\cC^2$, the gradient of $I$ is the usual gradient, the covariant Hessian at $u$ is given by
$$\nabla^2I[X,Y]\,|_u=\langle \mathbf{H}I(u)v,w\rangle=I''(u)[v,w], \qquad\text{where \ }v:=X(u), \ w:=Y(u),$$
and $\mathbf{H}I(u):=D(\nabla I)(u)$ is the linear mapping representing the usual Hessian because
\begin{align*}
\nabla^2I[X,Y]\,|_u&=\langle D(\nabla I)(u)[X(u)],Y(u)\rangle=\left\langle\lim_{t\to 0}\frac{\nabla I(u+tv)-\nabla I(u)}{t},w\right\rangle \\
&=\lim_{t\to 0}\frac{I'(u+tv)w-I'(u)w}{t}=I''(u)[v,w],
\end{align*}
see \cite[Remark 1.5]{w}.
}
\end{example}

$u\in M$ is a \emph{critical point} of $J$ if $\nabla J|_u=0$. It is called \emph{nondegenerate} if $\nabla^2J|_u:T_uM\times T_uM\to\r$ is a nondegenerate bilinear form.

\begin{lemma} \label{lem:hessian_critical}
If $u\in M$ is a critical point of $J$, the covariant Hessian at $u$ is given by
$$\nabla^2J[X,Y]\,|_u=X\langle\nabla J,Y\rangle|_u\qquad\text{for every \ }X,Y\in\mathscr X(M),$$
where $X\langle\nabla J,Y\rangle$ is the derivative of the function $\langle\nabla J,Y\rangle:M\to\r$ in the direction of $X$.
\end{lemma}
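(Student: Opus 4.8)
The plan is to reduce the identity to the Leibniz (product) rule for the connection $\nabla$ — which here is just the orthogonal projection of the ambient derivative — together with the vanishing of $\nabla J$ at a critical point. Concretely, I would set $Z:=\nabla J$ and fix local $\cC^1$ extensions $\bar Z,\bar Y:H\to H$ of $Z$ and $Y$ near $u$, as in the definition of the covariant derivative.

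First I would compute the left-hand side by choosing a curve $\gamma$ in $M$ with $\gamma(0)=u$ and $\dot\gamma(0)=X(u)$, so that, since $\bar Z$ and $\bar Y$ agree with $Z$ and $Y$ along $\gamma$,
\begin{align*}
X\langle\nabla J,Y\rangle|_u &= \frac{d}{dt}\Big|_{t=0}\langle \bar Z(\gamma(t)),\bar Y(\gamma(t))\rangle \\
&= \langle D\bar Z(u)[X(u)],Y(u)\rangle + \langle Z(u),D\bar Y(u)[X(u)]\rangle .
\end{align*}
Because $u$ is a critical point we have $Z(u)=\nabla J|_u=0$, so the second term drops out and the left-hand side reduces to $\langle D\bar Z(u)[X(u)],Y(u)\rangle$.

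Next I would expand the right-hand side directly from the definitions of the covariant Hessian and the covariant derivative:
\[
\nabla^2J[X,Y]|_u=\langle \nabla_X\nabla J|_u,Y(u)\rangle=\langle \pi_u(D\bar Z(u)[X(u)]),Y(u)\rangle .
\]
Since $\pi_u$ is the orthogonal (hence self-adjoint) projection onto $T_uM$ and $Y(u)\in T_uM$ satisfies $\pi_u(Y(u))=Y(u)$, I can move $\pi_u$ across the inner product to obtain $\langle D\bar Z(u)[X(u)],Y(u)\rangle$, which matches the left-hand side. This yields the claimed identity.

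There is no serious obstacle here; the only points requiring care are routine. One must check that the one-sided derivative along $\gamma$ is independent of the chosen curve and of the extensions $\bar Z,\bar Y$ (standard, since the values of $Z$ and $Y$ on $M$ already determine the derivative along any admissible curve), and one uses the self-adjointness of $\pi_u$, which is immediate for an orthogonal projection. The conceptual content is simply that the covariant Hessian and the naive derivative of $\langle\nabla J,Y\rangle$ differ by the term $\langle\nabla J,\nabla_XY\rangle$, which vanishes precisely because $\nabla J|_u=0$ at a critical point.
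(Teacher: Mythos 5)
Your proof is correct. It rests on the same two ingredients as the paper's own proof: a Leibniz rule for differentiating $\langle\nabla J,Y\rangle$, and the vanishing of $\nabla J$ at the critical point. The difference is in how the Leibniz rule enters. The paper simply cites that the connection on $M$ is Riemannian, writes $X\langle\nabla J,Y\rangle=\langle\nabla_X\nabla J,Y\rangle+\langle\nabla J,\nabla_XY\rangle$, and discards the last term at $u$. You instead verify the needed instance of this identity from scratch: differentiating $t\mapsto\langle\bar Z(\gamma(t)),\bar Y(\gamma(t))\rangle$ in the ambient Hilbert space produces the uncorrected remainder $\langle Z(u),D\bar Y(u)[X(u)]\rangle$ (rather than $\langle\nabla J,\nabla_XY\rangle|_u$), which you kill in the same way using $Z(u)=0$, and the self-adjointness of $\pi_u$ together with $\pi_uY(u)=Y(u)$ identifies the surviving term with $\langle\nabla_X\nabla J|_u,Y(u)\rangle=\nabla^2J[X,Y]|_u$. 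What your route buys is self-containedness: it uses nothing beyond the paper's definition of $\nabla_XY$ as a projected ambient derivative, and in particular it does not presuppose metric compatibility of that connection --- it proves exactly the piece of it needed at a critical point. What the paper's route buys is brevity, and an identity (the full product rule) valid at every point of $M$, with criticality invoked only to drop one term.
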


\begin{proof}
Since the connection on $M$ is Riemannian, we have that
$$X\langle\nabla J,Y\rangle=\langle\nabla_X\nabla J,Y\rangle + \langle\nabla J,\nabla_XY\rangle=\nabla^2J[X,Y] + \langle\nabla J,\nabla_XY\rangle.$$
If $u$ is a critical point of $J$, then $\langle\nabla J,\nabla_XY\rangle|_u=0$ and the claim follows.
\end{proof}

Let $G:H\to\r$ be a smooth function, $1$ be a regular value of $G$ and $M:=G^{-1}(1)$. Then $T_uM=\ker G'(u)$. Assume that $u\notin T_uM$ for all $u\in M$. 

As before, let $J:M\to\r$ be a $\cC^2$-function and $\bar J:H\to\r$ a $\cC^2$ extension of $J$. For $\lambda\in\r$ set $I:=\bar J+\lambda G$.

\begin{proposition} \label{prop:hessian}
Let $u\in M$. Then,
\begin{itemize}
\item[$(i)$] $u$ is a critical point of $I$ if and only if $u$ is a critical point of $J$ and 
$$\lambda=-\frac{\bar J'(u)u}{G'(u)u}.$$
\item[$(ii)$] If $u$ is a critical point of $I$, then 
$$\nabla^2J[X,Y]|_u=I''(u)[X(u),Y(u)]\qquad\text{for every \ }X,Y\in\mathscr X(M).$$
\end{itemize}
\end{proposition}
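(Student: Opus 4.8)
The plan is to reduce everything to the Lagrange-multiplier splitting $H = T_uM\oplus\r u$. This decomposition is available precisely under the standing hypotheses: since $1$ is a regular value, $G'(u)\neq 0$ and $T_uM=\ker G'(u)$ has codimension one, while the assumption $u\notin T_uM$ gives $G'(u)u\neq 0$, so the line $\r u$ is a genuine complement of $T_uM$ (and the denominator in the formula for $\lambda$ is nonzero).

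For part $(i)$ I would start from $I'(u)=\bar J'(u)+\lambda G'(u)$. If $I'(u)=0$, then testing against an arbitrary $v\in T_uM=\ker G'(u)$ gives $\bar J'(u)v=-\lambda G'(u)v=0$, whence $\pi_u(\nabla\bar J(u))=0$, i.e. $u$ is a critical point of $J$; testing against $u$ gives $\bar J'(u)u+\lambda G'(u)u=0$, which is exactly the claimed formula for $\lambda$. Conversely, if $u$ is a critical point of $J$ and $\lambda$ has the prescribed value, I would verify $I'(u)=0$ by checking it on each summand of $H=T_uM\oplus\r u$: on $T_uM$ both $\bar J'(u)$ and $G'(u)$ annihilate every vector, and on $\r u$ the identity $\bar J'(u)u+\lambda G'(u)u=0$ holds by the choice of $\lambda$.

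For part $(ii)$, assume $u$ is a critical point of $I$, hence of $J$ by $(i)$. By Lemma \ref{lem:hessian_critical},
\[
\nabla^2 J[X,Y]|_u=X\langle\nabla J,Y\rangle|_u .
\]
Because $Y(v)\in T_vM$ for $v\in M$, orthogonality of $\pi_v$ yields $\langle\nabla J,Y\rangle|_v=\langle\nabla\bar J(v),Y(v)\rangle=\bar J'(v)Y(v)$. Differentiating this function along a curve $\gamma$ in $M$ with $\gamma(0)=u$ and $\dot\gamma(0)=X(u)$, and using a local extension $\bar Y$ of $Y$ (so $Y=\bar Y$ on $M$), the product rule for the pairing $\langle\nabla\bar J(v),\bar Y(v)\rangle$ gives
\[
\nabla^2 J[X,Y]|_u=\bar J''(u)[X(u),Y(u)]+\bar J'(u)[D\bar Y(u)X(u)].
\]
The task then becomes identifying the last term with $\lambda\,G''(u)[X(u),Y(u)]$, after which the desired equality $\nabla^2 J[X,Y]|_u=I''(u)[X(u),Y(u)]$ follows immediately from $I''=\bar J''+\lambda G''$.

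The crux, and the step I expect to be the main obstacle, is handling the term $\bar J'(u)[D\bar Y(u)X(u)]$: it depends on the chosen extension $\bar Y$ and is not manifestly symmetric in $X,Y$. The resolution is to differentiate the constraint. Since $G'(v)Y(v)=0$ for all $v\in M$, differentiating along the same curve $\gamma$ gives
\[
G''(u)[X(u),Y(u)]+G'(u)[D\bar Y(u)X(u)]=0 .
\]
At the critical point, $I'(u)=0$ reads $\bar J'(u)=-\lambda G'(u)$, so
\[
\bar J'(u)[D\bar Y(u)X(u)]=-\lambda\,G'(u)[D\bar Y(u)X(u)]=\lambda\,G''(u)[X(u),Y(u)].
\]
Substituting this back produces $\nabla^2 J[X,Y]|_u=\bar J''(u)[X(u),Y(u)]+\lambda\,G''(u)[X(u),Y(u)]=I''(u)[X(u),Y(u)]$, as claimed. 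As a bonus, the differentiated-constraint identity converts the extension-dependent term into the symmetric quantity $\lambda\,G''(u)[X(u),Y(u)]$, which also confirms that the computation is consistent with the symmetry of $I''(u)$ and hence independent of the choice of $\bar Y$.
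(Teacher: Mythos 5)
Your proof is correct, and part $(i)$ coincides with the paper's. For part $(ii)$, however, you take a genuinely different (more computational) route. Both arguments begin the same way, using Lemma \ref{lem:hessian_critical} to write $\nabla^2J[X,Y]|_u=X\langle\nabla J,Y\rangle|_u$, and both must then differentiate an extension of the function $\langle\nabla J,Y\rangle:M\to\r$; the difference lies in the choice of extension. You extend it as $v\mapsto\bar J'(v)\bar Y(v)$, which forces you to confront the extension-dependent term $\bar J'(u)[D\bar Y(u)X(u)]$; you dispose of it by differentiating the constraint identity $G'(v)Y(v)=0$ along a curve in $M$, obtaining $G''(u)[X(u),Y(u)]+G'(u)[D\bar Y(u)X(u)]=0$, and then substituting $\bar J'(u)=-\lambda G'(u)$ to convert that term into $\lambda\,G''(u)[X(u),Y(u)]$. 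This is the classical second-order Lagrange multiplier computation: it makes the identity $\nabla^2J=\bar J''+\lambda G''$ on tangent vectors explicit, and it shows directly that the result is independent of the extension $\bar Y$. The paper instead extends $\langle\nabla J,Y\rangle$ as $v\mapsto\langle\nabla I(v),\bar Y(v)\rangle$, which is legitimate because $G'(v)Y(v)=0$ on $M$ makes the two functions agree there; with this choice the correction term is $I'(u)[D\bar Y(u)X(u)]$, which vanishes outright at a critical point of $I$, so the constraint never needs to be differentiated and $G''$ never appears — one simply applies Lemma \ref{lem:hessian_critical} a second time, now to $I$ on $H$, together with Example \ref{example_hessian}. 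In short: the paper's choice of extension is the slicker one, eliminating the bookkeeping you had to do, while your version is more elementary and recovers the familiar constrained second-order condition in explicit form. Both are complete proofs.
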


\begin{proof}
$(i):$ If $0=I'(u)v=[\bar J'(u)+\lambda G'(u)]v$ for all $v\in H$ then, as $G'(u)v=0$ for all $v\in T_uM$, we have 
\begin{equation*}
\langle \nabla J,X\rangle|_u=\bar J'(u)[X(u)]=I'(u)[X(u)]=0\qquad\text{for every \ }X\in\mathscr X(M)
\end{equation*}
and, as $I'(u)u=0$, we derive that
$$\lambda=-\frac{\bar J'(u)u}{G'(u)u},$$
as claimed. The converse is also easy.

$(ii):$ Since $G'(u)v=0$ for every $u\in M$ and $v\in T_uM$, we have that
$$\langle\nabla J,Y\rangle|_u=\langle\nabla\bar J(u),Y(u)\rangle=\langle\nabla I(u),Y(u)\rangle\qquad\text{for every \ }Y\in\mathscr X(M), \ u\in M,$$
i.e., the function $\langle\nabla I,\bar Y\rangle$ is an extension of $\langle\nabla J,Y\rangle$ to $H$ for any extension $\bar Y$ of $Y$. Hence, their directional derivatives satisfy
$$X\langle\nabla J,Y\rangle|_u=\bar X\langle\nabla I,\bar Y\rangle|_u\qquad \text{for every \ }X,Y\in\mathscr X(M), \ u\in M.$$
If $u$ is a critical point of $I$ then, by $(i)$, $u$ is a critical point of $J$. So the previous identity, together with Lemma \ref{lem:hessian_critical} and Example \ref{example_hessian}, yields
$$\nabla^2J[X,Y]\,|_u=\nabla^2I[X,Y]\,|_u=I''(u)[X(u),Y(u)]\qquad\text{for every \ }X,Y\in\mathscr X(M),$$
as claimed.
\end{proof}

\bigskip

\begin{flushleft}
\textbf{Mónica Clapp}\\
Instituto de Matemáticas\\
Universidad Nacional Autónoma de México\\
Circuito Exterior, Ciudad Universitaria\\
04510 Coyoacán, Ciudad de México, Mexico\\
\texttt{monica.clapp@im.unam.mx} 
\medskip

\textbf{Andrzej Szulkin}\\
Department of Mathematics\\
Stockholm University\\
106 91 Stockholm, Sweden\\
\texttt{andrzejs@math.su.se} 
\end{flushleft}

\end{document}